\definecolor{amethyst}{rgb}{0.6, 0.4,0.8}
\definecolor{indigo}{rgb}{.2109, .1016, .8359}
\newenvironment{staticalgorithm}{%
  \refstepcounter{algorithm}
  \par\addvspace{\abovecaptionskip}%
  \hrule height 0.8pt 
  \vspace{3pt}
  \noindent%
}{%
  \hrule height 0.8pt 
  \addvspace{\belowcaptionskip}
  \vspace{6pt}
}
\newcommand{\staticalgorithmcaption}[3]{
  \hspace{-10pt} \textbf{Algorithm \thealgorithm:} #1\\ 
  \vspace{-6pt} 
  \hrule height 0.4pt 
  \vspace{5pt}
  \noindent
  \textbf{Input:} #2 \\
  \textbf{Output:} #3 \\
  \vspace{-6pt}
  \hrule height 0.4pt 
  \vspace{-3pt}
  \noindent
}
\newcommand{\Fl}{\mathbb{F}_\ell}
\newcommand{\Fp}{\mathbb{F}_p}
\newcommand{\FF}{\mathbb{F}}
\newcommand{\Q}{\mathbb{Q}}
\newcommand{\Z}{\mathbb{Z}}
\newcommand{\R}{\mathbb{R}}
\newcommand{\C}{\mathbb{C}}
\renewcommand{\O}{\mathcal{O}}
\newcommand{\Spec}{{\rm Spec}}
\newcommand{\Aut}{{\rm Aut}}
\newcommand{\End}{{\rm End}}
\newcommand{\Gal}{{\rm Gal}}
\newcommand{\GL}{{\rm GL}}
\newcommand{\GSp}{{\rm GSp}}
\newcommand{\Frob}{{\rm Frob}}
\newcommand{\im}{\operatorname{im}}
\newcommand{\Jac}{{\rm Jac}}
\newcommand{\p}{\mathfrak{p}}
\newcommand{\PP}{\mathbb{P}}
\theoremstyle{definition}
\newtheorem{defi}{Definition}[subsection]
\newtheorem{rmk}[defi]{Remark}
\newtheorem{cor}[defi]{Corollary}
\newtheorem{thm}[defi]{Theorem}
\newtheorem{lemma}[defi]{Lemma}
\newtheorem*{introthmm}{Theorem}
\title{Mod-5 Galois images from abelian surfaces}
\author{Aidan Hennessey, Mathilde Kermorgant, and Andy Zhu}
\date{}
\begin{document}

\maketitle

\begin{abstract}
    For $J$ an abelian surface, the Galois representation $\rho_{J, \ell} : \Gal(\overline{\Q}/\Q) \rightarrow \Aut(J[\ell]) \simeq \GSp_4(\Fl)$ is typically surjective, with smaller images indicating extra arithmetic structure. It is already known how to probabilistically compute whether $\rho_{J, \ell}$ is surjective, and recent work by Chidambaram computes $\im \rho_{J, \ell}$ for $\ell = 2, 3$. We probabilistically compute $\rho_{J, 5}$ for the Jacobians of 95\% of genus 2 curves in the L-functions and Modular Forms Database (LMFDB) for which $\rho_{J, 5}$ is not yet known. For the remaining Jacobians, we determine the order of the image and give a short list of candidate images.
\end{abstract}

\section*{Introduction}

Let $C/\Q$ be a genus $g$ hyperelliptic curve. Let $J = \Jac(C)$ be the Jacobian of $C$. There is a natural action of the absolute Galois group $\Gal(\overline{\Q}/\Q)$ on $C$ which extends to an action on $J$.

Let $J[n]$ denote the $n$-torsion subgroup of $J$. As a group, $J[n]$ is isomorphic to $(\Z/n\Z)^{2g}$. The multiplication-by-$n$ map on $J$ is given by rational equations, so $J[n]$ is a subvariety defined over $\Q$. Because $J[n]$ is a subvariety, the above-defined action of $\Gal(\overline{\Q}/\Q)$ restricts to an action of $\Gal(\overline{\Q}/\Q)$ on $J[n]$. Since the group law of the Jacobian is defined by rational equations, the absolute Galois group respects it, acting by group automorphisms. This gives a map $\rho_{J, n}: \Gal(\overline{\Q}/\Q) \rightarrow \Aut_{\mathsf{Gp}}(\Z/n\Z)^{2g}$. For $n=\ell$ a prime, we identify $\Aut_{\mathsf{Gp}}(\Z/n\Z)^{2g} \cong \GL_{2g}(\Fl)$. 

\begin{defi}[Mod-$\ell$ Galois representation from torsion]
    For a Jacobian $J$ of a genus $g$ hyperelliptic curve and a prime $\ell$, the \emph{Galois representation from $\ell$-torsion on $J$} is the map $\rho_{J, \ell}: \Gal(\overline{\Q}/\Q) \rightarrow \GL_{2g}(\Fl)$.
\end{defi}

The image of the mod-$\ell$ Galois representation is of interest because it encodes various arithmetic information. For example, for $J$ the Jacobian of a genus 2 curve, $J$ admits a rational $\ell$-torsion point if and only if $\im \rho_{J, \ell}$ is conjugate to a subgroup of $G_1$, and $J$ admits an $\ell$-isogeny if and only if $\im \rho_{J, \ell}$ is conjugate to a subgroup of $G_2$, where
\[
    G_1 = \begin{bmatrix}
        1 &*&*&*\\
        0&*&*&*\\
        0&*&*&*\\
        0&*&*&*\\
    \end{bmatrix}
    \quad \text{ and } \quad
    G_2 = \begin{bmatrix}
        *&*&*&*\\
        0&*&*&*\\
        0&*&*&*\\
        0&*&*&*\\
    \end{bmatrix}.
\]

It is difficult to rigorously determine the image of Galois, but there is a large body of work using probabilistic methods to determine high-likelihood candidate images. Sutherland \cite{Elliptic-curve-images} computed $\im \rho_{E, \ell}$ using probabilistic methods for all elliptic curves $E$ without complex multiplication in the LMFDB and all primes $\ell$. For genus 2 curves, it is a theorem of Serre \cite[pp.~50--51]{serre-open-image} that if $\End\, J(\overline{\Q}) =\Z$ then $\rho_{J, \ell}$ surjects onto $\GSp_4(\Fl)$ for all but finitely many primes. For each such curve in the LMFDB, Banwait, Brumer, Kim, Klagsbrun, Mayle, Srinivasan, and Vogt \cite{Vogt-surjectivity} developed a probabilistic algorithm to compute the finite set of non-surjective primes. Some work towards computing the precise image in non-surjective cases already exists. In the $\ell = 2$ case, it suffices to consider the Weierstrass points to identify $\Q(J[2])$. The $\ell = 3$ case is addressed in recent work by Chidambaram \cite{mod-3-image-paper}.

In this paper, we address the $\ell = 5$ case. Our main contribution lies in developing an algorithm to compute the likely image of $\rho_{J,5}$ for a Jacobian $J$. 

\begin{introthmm}[Main Theorem] \label{thm: correct-imager}
    Let $C/\Q$ be a genus 2 hyperelliptic curve with Jacobian $J$. There exists an effective constant $N$, depending only on $C$, such that there exists an algorithm sampling all primes in the range $[10000, N]$ 
    that produces a list of at most eight equal-order subgroups containing the mod-$5$ image of Galois.
\end{introthmm}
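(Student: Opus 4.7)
The plan is to combine a Chebotarev-style sampling argument with a once-and-for-all enumeration of subgroups of $\GSp_4(\mathbb{F}_5)$. For any prime $p$ of good reduction with $p \neq 5$, the point counts $\#C(\mathbb{F}_p)$ and $\#C(\mathbb{F}_{p^2})$ determine the Weil polynomial of $J$ at $p$, and its reduction modulo $5$ is the characteristic polynomial of $\rho_{J,5}(\Frob_p)$, which in turn pins down the $\GSp_4(\mathbb{F}_5)$-conjugacy class of $\rho_{J,5}(\Frob_p)$ (up to the ambiguity shared by all elements with equal characteristic polynomial). By the Chebotarev density theorem, as $p$ varies over good primes, these conjugacy classes become equidistributed in $\im \rho_{J,5}$.

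The first step is to use a computer algebra system to enumerate all conjugacy classes of subgroups $H \leq \GSp_4(\mathbb{F}_5)$ and, for each, compute its \emph{Frobenius signature}: the probability distribution on characteristic polynomials induced by uniform sampling from $H$. Subgroups sharing a signature are grouped together, and the key group-theoretic lemma to prove is that each equivalence class has cardinality at most $8$ and that all of its members have the same order. I expect this to be the main obstacle: it reduces to a large finite verification in $\GSp_4(\mathbb{F}_5)$ (which has order $9{,}360{,}000$), and the bound $8$ must be extracted from the computation rather than predicted in advance.

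Granted the lemma, the final step is to invoke an effective Chebotarev density theorem (under GRH via Lagarias--Odlyzko, or in the unconditional form tailored to abelian surfaces used in \cite{Vogt-surjectivity}) to produce an effective $N = N(C)$ guaranteeing that every conjugacy class of $\im \rho_{J,5}$ is realized by some $\Frob_p$ with $p \in [10000, N]$, and that the empirical distribution of characteristic polynomials matches the true signature of $\im \rho_{J,5}$ exactly. The algorithm then outputs every subgroup $H$ whose signature matches the observed data, and by the lemma this yields a list of at most eight equal-order subgroups, one of which is (a conjugate of) $\im \rho_{J,5}$, as desired.
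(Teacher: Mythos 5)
Your overall strategy matches the paper's: enumerate subgroups of $\GSp_4(\FF_5)$ with surjective similitude character, compute for each a probability distribution on Frobenius invariants, establish a group-theoretic bound on the size of each ``bucket'' of subgroups sharing that distribution, and invoke an effective Chebotarev argument to guarantee the true image lands in the selected bucket. However, there are two concrete gaps.

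First, your ``Frobenius signature'' records only the distribution of characteristic polynomials, whereas the paper's local distribution tracks the pair (characteristic polynomial, dimension of the $1$-eigenspace of $\Frob_p$), the latter computed by counting $J(\Fp)[5]$. The $1$-eigenspace dimension is genuinely extra information: Table~\ref{tab: empirical-true-dist} shows the same characteristic polynomial (e.g.\ $x^4+x^3+x^2+x+1$) arising with several different $1$-eigenspace dimensions, so the characteristic polynomial alone does not determine the conjugacy class. The bound of $8$ in Lemma~\ref{lem: bucket-size} is a computation for the joint invariant, and there is no reason to expect it (or the equal-order property) to survive if you forget the eigenspace data. Since the paper deliberately extracts this second piece of local information, the natural inference is that the bound would \emph{fail} with characteristic polynomials alone; at a minimum, your proposed lemma is a different and unverified finite computation.

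Second, you assert that an effective Chebotarev bound produces an $N$ for which ``the empirical distribution of characteristic polynomials matches the true signature of $\im \rho_{J,5}$ exactly.'' That is not what effective Chebotarev gives: for any finite $N$ the empirical proportions $P_\mathcal{C}(10000, N)/(\pi(N)-\pi(10000))$ will generically differ from $\#\mathcal{C}/\#G$, and there is no guarantee of exact equality for any $N$. The paper resolves this by the distance-minimization argument of Lemma~\ref{lem: distribution-filter}: since there are finitely many candidate distributions, once $N$ is large enough that the empirical distribution is within half the minimal gap to the true one, the true subgroup's distribution is a unique closest point, and the algorithm selects the correct bucket. You need some argument of this shape; exact matching cannot be the criterion.
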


The parameter $N$ in the Main Theorem is a computationally intractable bound. Hence, we compute likely images by sampling a smaller set of primes. 

\begin{introthmm}[Computational Results]
    Let $J$ be the Jacobian of one of the 3990 genus two curves in the \cite{lmfdb} for which $\rho_{J,5}$ is not known to be surjective. On input $J$, Algorithm \ref{alg: imager} produces a set of at most five equal-order likely images of $\rho_{J,5}$. Furthermore, for 3867 of the curves described above, Algorithm 1 produces a single likely image.
    
    Assume an independent random distribution of Frobenius elements and that, as a prior, every subgroup of $\GSp_4(\FF_5)$ with surjective similitude character is an equally likely candidate image. Then, the probability that $\rho_{J, 5}$ is \emph{not} on the list of subgroups output by Algorithm \ref{alg: imager} is bounded above by 1 in $9.5$ billion.
\label{thm: computational results}
\end{introthmm}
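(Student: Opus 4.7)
The statement has two parts: a computational census and a Bayesian error bound. I will treat them separately.

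For the first part, the plan is to run Algorithm \ref{alg: imager} on each of the 3990 Jacobians in the LMFDB for which $\rho_{J,5}$ is not known to be surjective, and simply tabulate the output. The algorithm is deterministic given the choice of primes sampled, so the assertions that every output set has size at most five, and that exactly 3867 outputs are singletons, are purely empirical claims verified by the run. This amounts to reporting the outcome of a computation; nothing mathematical is required beyond executing the algorithm and recording its results.

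For the probability bound, the plan is to apply Bayes' theorem given the stated prior. Let $\mathcal{S}$ denote the (finite) set of subgroups $H \leq \GSp_4(\FF_5)$ with surjective similitude character, and let $\mathcal{L}_J \subseteq \mathcal{S}$ denote the list output by Algorithm \ref{alg: imager} on $J$. For each sampled prime $p$ of good reduction, the characteristic polynomial $f_p(T)$ of $\Frob_p$ acting on $J[5]$ is an observable datum. Under the independence assumption, for each candidate $H \in \mathcal{S}$ one has
\[
    \Pr\bigl(\{f_p\}_{p} \mid \im \rho_{J,5} = H\bigr) \;=\; \prod_{p} \frac{\#\{g \in H : \text{char poly}(g) = f_p\}}{\#H}.
\]
Combined with the uniform prior, Bayes' theorem gives the posterior
\[
    \Pr(\im \rho_{J,5} = H \mid \text{data}) \;=\; \frac{\Pr(\text{data} \mid H)}{\sum_{H' \in \mathcal{S}} \Pr(\text{data} \mid H')},
\]
and the error probability for a given curve is $\sum_{H \notin \mathcal{L}_J} \Pr(\im \rho_{J,5} = H \mid \text{data})$. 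I would compute this quantity for each of the 3990 curves and verify that the maximum over all curves is at most $1/(9.5 \times 10^9)$.

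The main obstacle is the enumeration of $\mathcal{S}$ and the tabulation of characteristic-polynomial distributions on each $H$, since $|\GSp_4(\FF_5)|$ is on the order of $3.7 \times 10^7$ and its subgroup lattice is enormous. The strategy is to precompute $\mathcal{S}$ up to conjugacy once (using a computer algebra system such as Magma or GAP), store for each conjugacy class of $H$ its distribution of characteristic polynomials, and then reuse this data across all 3990 curves. A secondary issue is that the posterior calculation involves dividing quantities that can be astronomically small; this is handled by working with log-likelihoods throughout and only exponentiating at the final step. Once these precomputations are in place, evaluating the Bayesian bound for each curve is a routine loop over $\mathcal{S}$ and over the sampled primes.
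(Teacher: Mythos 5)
Your overall strategy — uniform prior, likelihood computation, Bayes' rule — is the same as the paper's, but the details differ in two ways, one of which is a genuine gap and one of which is a different (more expensive) route.

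\textbf{Gap in the data model.} You take the characteristic polynomial $f_p$ of $\Frob_p$ as the observable. But Algorithm~\ref{alg: frob_sample} records the \emph{pair} $(\Pi_p, \dim_p)$ consisting of the characteristic polynomial \emph{and} the dimension of the $1$-eigenspace, and the local distributions $\mathcal{F}_G$ are defined over such pairs. Since the likelihoods $\mathcal{P}(\mathcal{D}_i, J, N)$ that drive the bound are computed against these 99-dimensional pair distributions (Lemma~\ref{lem: 99-vector space}), your model — which throws away the eigenspace data — would not reproduce the same likelihood ratios, and hence not the same bound. You also sum over individual subgroups $H \in \mathcal{S}$; the paper groups subgroups by shared local distribution into classes $G(\mathcal{D}_i)$ because the local data cannot distinguish within a class, and the posterior is correctly a statement about the class $G(\mathcal{D}_1)$, not an individual subgroup.

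\textbf{Different route for the bound.} You propose to compute the exact posterior for every curve and numerically check that the maximum error is below $1/(9.5 \times 10^9)$. The paper instead proves the bound \emph{uniformly} for all curves by exploiting the structure of Algorithm~\ref{alg: imager}: the algorithm stops sampling only once the log-likelihood difference $\mathcal{L}(J,N)$ exceeds $\nu = 30$, and every curve in fact terminated this way. This gives the a priori inequality $\mathcal{P}(\mathcal{D}_1, J, N) \geq e^{30}\mathcal{P}(\mathcal{D}_i, J, N)$ for all $i \neq 1$, which, plugged into Bayes' rule along with the uniform prior over the 1125 candidate subgroups, yields the closed-form bound
\[
\frac{e^{-30}\bigl(1 - \tfrac{1}{1125}\bigr)^2}{\tfrac{1}{1125} + e^{-30}\bigl(1 - \tfrac{1}{1125}\bigr)} \approx 1.05 \times 10^{-10}.
\]
Your direct per-curve posterior computation would (with the correct observable) also be valid and would in fact give a much tighter bound, since the realized log-likelihood ratios are typically in the hundreds; but it would not naturally produce the stated figure of $1$ in $9.5$ billion, which is a consequence of the stopping threshold $\nu = 30$ rather than of the realized data. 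The missing idea in your proposal is this use of the stopping criterion to obtain a clean, uniform, analytically derived bound.
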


In Section \ref{sec: background} we review key background about the structure of Jacobians of genus 2 curves and their $5$-torsion subgroups. We also discuss Frobenius elements and restrictions on the possible images of Galois. In Section \ref{sec: Algorithm}, we describe Algorithm \ref{alg: imager} in detail and provide a proof of the Main Theorem.
In Section \ref{sec: results}, we prove the error bound claimed in the Computational Results Theorem, present the result  of running Algorithm \ref{alg: imager} on the genus 2 curves in the LMFDB, and give some additional information about those images.

Code for this work can be found in our Github repository
\begin{center}
    \href{https://github.com/maathilde-k/Mod-5-Galois-Images-of-Genus-2-Abelian-Curves/}{\texttt{https://github.com/maathilde-k/Mod-5-Galois-Images-of-Genus-2-Abelian-Curves/}}.
\end{center}

Throughout the paper, let $f \in \Z[x]$ be a a polynomial of degree 5 or 6, $C$ be the corresponding genus 2 hyperelliptic curve given by $y^2 = f(x)$, and $J = \Jac(C)$ be the Jacobian of $C$. Let $\im \rho_{J, 5}$ denote the image of the associated mod-5 Galois representation, and refer to this as the ``image of Galois". Generally, one has the $\overline{\Q}$-endomorphism group $\End(J) \cong \Z$ with the endomorphisms being multiplication-by-$n$ maps. We refer to curves $C$ with $\End(\Jac(C_{\overline{\Q}})) = \Z$ as \emph{typical}. We refer to curves whose Jacobians admit additional endomorphisms as \emph{atypical}.

Throughout our paper, we label subgroups of $\GSp_4(\FF_\ell)$ according to the labeling scheme used by \cite{lmfdb}. These labels take the format $[\ell.i.j]$, where $\ell$ is the characteristic of the ground field $\Fl$. The second parameter, $i$, indicates the index of the subgroup in $\GSp_4(\Fl)$. The final parameter, $j$, is simply a position in the list of conjugacy classes of index $i$ subgroups.

\subsection*{Acknowledgements}

We thank Sachi Hashimoto and Isabel Vogt for their mentorship, without which this project would not have been possible. We are also grateful to Edgar Costa and MIT for technical support and access to the Châtelet computing cluster, and to Andrew Sutherland and Kiran Kedlaya for access to \texttt{SmallJac}, with additional thanks to Andrew Sutherland for \texttt{GSPLattice}. We thank Shiva Chidambaram for his helpful answers to our questions. This project was funded by Brown University and supervised by Sachi Hashimoto and Isabel Vogt.

\section{Background} \label{sec: background}

\subsection{The possible images of Galois}

\begin{defi}[General symplectic group, $\GSp_{2g}(F)$, similitude character] Let $F$ be a field, and $A$ a non-degenerate symplectic form over $F$. Up to conjugation, we may take $A$ to be 
\[ A = \begin{pmatrix} 0 & I_n \\ -I_n & 0\end{pmatrix}.\]
The \textit{general symplectic group} $\GSp_{2g}(F)$ is the subgroup $$\GSp_{2g}(F) := \left\{M \in \GL_{2g}(F) : M^T A M = \lambda A, \quad \lambda \in F^{\times} \right\}$$ of $\GL_{2n}(\mathbb{F}_{\ell})$ which preserves $A$ up to scalars. In the notation of the above line, the map $M \mapsto \lambda$ is a group homomorphism $\GSp_{2g}(F) \rightarrow F^\times$, and is known as the \emph{similitude character}.
\end{defi}

The Weil pairing furnishes a non-degenerate symplectic form on the $5$-torsion, and is preserved up to scalars by the action of Galois. As such, the codomain of $\rho_{J, 5}$ may be restricted to $\GSp_4(\FF_{5})$. Furthermore, over $\Q$, the image of the Galois representation must have surjective similitude character, 
restricting the possibilities for $\im \rho_{J, 5}$. Additionally, complex conjugation acts on $J[5]$ as a non-trivial involution and has similitude $-1$ (since it sends $\zeta_5 \mapsto \zeta_5^{-1}$). Thus, the image of Galois must include an element of order 2 and similitude -1. This leaves (up to conjugacy) 1125 subgroups of $\GSp_4(\FF_5)$ for consideration. We therefore work on distinguishing between these 1125 subgroups.

One way subgroups may be distinguished is by invariant subspaces. The following notion will be helpful.
\begin{defi}[$(\Lambda)$-group eigenspace]
    \label{def: group_eigenspace}
    Let $\Lambda \subseteq F^\times$ be a subset of the unit group of a field $F$. Let $V$ be a vector space over $F$ and let $G \leq \Aut(V)$ be a subgroup of the automorphism group of $V$. A $\Lambda$-group eigenspace of $G$ is a subspace $W \subseteq V$ such that for all automorphisms $g \in G$, $W$ is an eigenspace of $g$ with eigenvalue $\lambda_g \in \Lambda$.
\end{defi}

\label{sec: possible_images}

\subsection{Mumford Coordinates}

Throughout the paper, we denote points of the Jacobian by their \emph{Mumford coordinates}. 

\begin{defi} [Mumford coordinates]
    Let $C$ be a genus 2 curve with two points at infinity (rather than a Weierstrass point). Let $D$ be a divisor class on $C$, with reduced divisor $[P_1] + [P_2] - ([\infty_1]+[\infty_2])$, where $P_1$, $P_2$ are points on $C$ and $\infty_1$ and $\infty_2$ are the points at infinity. The \emph{Mumford coordinates} $(u,v)$ of $D$ are the unique pair of polynomials $u$ and $v$ such that:
    \begin{enumerate}
        \item $u$ is monic of degree at most $2$,
        \item $u(x(P_1)) = 0$ and $u(x(P_2)) = 0$,
        \item $v$ is of degree at most $1$,
        \item $u \,\vert\, v^2 - f$.
    \end{enumerate}
\end{defi}

\begin{rmk}
    If $P_1$ or $P_2$ lies at infinity, $u$ has degree less than 2. If $P=Q$, $u$ has a double root at their $x$ coordinate.
\end{rmk}

\begin{rmk} 
     A point on the Jacobian is defined over $K$ if and only if all the coefficients of the polynomials comprising its Mumford coordinates lie in $K$.
\end{rmk}

\subsection{Frobenius Elements}

The most direct approach to computing the image of a Galois representation would be to compute the $\ell$-torsion points over $\overline{\Q}$ and their images under different field automorphisms. However, the 5-torsion field of an abelian surface is too large to compute explicitly for a large number of curves. Taking inspiration from \cite{Elliptic-curve-images} and \cite{Vogt-surjectivity}, we instead collect information by sampling Frobenius elements.

\begin{defi}[5-torsion field]
    Let $C$ be a genus 2 curve defined over $\Q$ with Jacobian $J=\Jac(C)$. Let $\Q(J[5])$ denote the \emph{5-torsion field of $J$}, defined to be the smallest field over which all the 5-torsion points are defined. Concretely, the 5-torsion field may be obtained by adjoining to $\Q$ the coefficients of the Mumford coordinates of the 5-torsion points.
    \label{def: 5-torsion_field}
\end{defi}

\begin{rmk}
    The 5-torsion field is a Galois extension.
\end{rmk}

Let $p$ be a prime of good reduction. The action of the Frobenius element $\Frob_p$ on the 5-torsion of an abelian variety $J$ is the same as the action of the Frobenius automorphism of $\overline{\FF}_p$ on the 5-torsion $J(\overline{\FF}_p)[5]$ over $\Fp$. Thus, properties of Frobenius elements may be calculated indirectly from structure of the reduction of $J$ modulo $p$. We refer to such calculations as ``local methods". Frobenius elements are representative of the Galois group in the sense that their images under $\rho_{J,5}$ equidistribute in conjugacy classes, as formalized in the following theorem.

\begin{thm}[Chebotarev Density Theorem]
    Let $K/\Q$ be a Galois extension. Let $\mathcal{C}$ be a conjugacy class of $G = \Gal(K/\Q)$. Let $$P_\mathcal{C}(n) = \#\{\text{prime } p \>|\> \Frob_p \in \mathcal{C}, p < n\}$$ denote the number of primes $p$ less than $n$ such that $\Frob_p \in \mathcal{C}$. Let $\pi(n)$ denote the number of primes less than $n$. Then, we have that $$\lim_{n \rightarrow \infty} \frac{P_{\mathcal{C}}(n)}{\pi(n)} = \frac{\#\mathcal{C}}{\#G}.$$
    \label{thm: chebotarev}
\end{thm}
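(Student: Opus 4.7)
The plan is to convert the stated density into an analytic statement about partial sums of character values at Frobenius elements, and then invoke the analytic theory of $L$-functions attached to the extension $K/\Q$. By orthogonality of irreducible characters on $G$, the indicator function of $\mathcal{C}$ can be written as
\[
\mathbf{1}_\mathcal{C}(g) = \frac{\#\mathcal{C}}{\#G} \sum_{\chi} \overline{\chi(\mathcal{C})}\,\chi(g),
\]
where $\chi$ ranges over irreducible characters of $G$. Summing over unramified primes $p < n$, the contribution of the trivial character already produces the predicted main term $(\#\mathcal{C}/\#G)\,\pi(n)$, so the theorem reduces to showing that for every nontrivial irreducible $\chi$ one has $\sum_{p < n} \chi(\Frob_p) = o(\pi(n))$.

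To control these sums, I would attach to each nontrivial irreducible $\chi$ the Artin $L$-function $L(s,\chi,K/\Q) = \prod_{p} \det(I - \rho(\Frob_p)p^{-s})^{-1}$, where $\rho$ realizes $\chi$. A Tauberian input (Wiener--Ikehara, or Landau's theorem applied to $-L'/L$) converts the partial sums of $\chi(\Frob_p)$ into a statement about the analytic behavior of $L(s,\chi)$ on the closed half-plane $\mathrm{Re}(s) \geq 1$: if $L(s,\chi)$ admits meromorphic continuation to a neighborhood of this half-plane and is holomorphic and nonvanishing on the line $\mathrm{Re}(s) = 1$, then the desired $o(\pi(n))$ bound follows.

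The main obstacle is supplying these analytic properties. For abelian $G$, one-dimensional characters identify Artin $L$-functions with Hecke $L$-functions of ray class characters, and their non-vanishing on $\mathrm{Re}(s) = 1$ is classical, generalizing Dirichlet's theorem. For general $G$, I would invoke Brauer's induction theorem to write $\chi$ as a $\Z$-linear combination of characters induced from one-dimensional characters of (elementary) subgroups $H \leq G$. The inductive property of Artin $L$-functions then expresses $L(s,\chi)$ as a product and quotient of Hecke $L$-functions of the intermediate fields $K^{H}$, transferring meromorphicity and non-vanishing from the abelian case. Combining this with the Tauberian step completes the proof.
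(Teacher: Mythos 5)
The paper does not prove Theorem \ref{thm: chebotarev}; it states it as classical background and invokes it, without proof or citation, as a black box for the convergence of empirical Frobenius statistics. So there is no paper proof to compare against. Your outline, however, is a correct sketch of the standard modern proof, and the overall architecture is sound: second orthogonality relations give $\mathbf{1}_\mathcal{C}(g) = \frac{\#\mathcal{C}}{\#G}\sum_\chi \overline{\chi(\mathcal{C})}\chi(g)$, the trivial character yields the main term $\frac{\#\mathcal{C}}{\#G}\pi(n)$, and the problem reduces to showing $\sum_{p<n}\chi(\Frob_p)=o(\pi(n))$ for nontrivial irreducible $\chi$, which you correctly tie to holomorphy and nonvanishing of $L(s,\chi)$ on $\mathrm{Re}(s)=1$ via a Tauberian theorem and partial summation, and which you obtain in the non-abelian case via Brauer induction.

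Two small points worth tightening if you intended this as a full proof rather than a sketch. First, the Euler product for the Artin $L$-function must take inertia invariants at ramified primes, i.e. $\det(I-\rho(\Frob_p)p^{-s}\,|\,V^{I_p})^{-1}$; this does not affect densities since only finitely many primes ramify, but the product as you wrote it is not quite the Artin $L$-function. Second, when you apply Brauer's theorem, the exponents in the resulting expression $L(s,\chi)=\prod_i L(s,\psi_i)^{n_i}$ can be negative, so you need to argue explicitly that the combination is still holomorphic and nonvanishing on $\mathrm{Re}(s)=1$; this works because each Hecke $L$-function is both holomorphic and nonvanishing there, so neither numerator nor denominator factors can vanish, but it deserves a sentence. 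It is also worth knowing that this Brauer-induction route is anachronistic relative to Chebotarev's original 1926 proof, which used a field-crossing reduction to the cyclotomic case and predates both Artin $L$-functions and Brauer's theorem; Deuring later gave a proof reducing to cyclic subextensions that also avoids Brauer induction. These alternatives are more elementary but the structure you chose is what most modern textbooks present and is perfectly valid.
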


By applying the Chebotarev density theorem to $K=\Q(J[5])$, we may use statistical methods to infer the likely image of the Galois representation. Effective versions of the Chebotarev density theorem, such as that in \cite{effective}, give bounds on the error of such statistical methods. Thus, adequately large samples could give results not just with arbitrarily small probability of error, but with 0 probability of error. However, even assuming the generalized Riemann Hypothesis, the required sample sizes are intractably large.

We compute the characteristic polynomial of the Frobenius element $\Frob_p$ via the $L$-polynomial of the reduction of $J$ mod $p$. By the Weil Conjectures, the L-polynomial of $J(\Fp)$ is the reciprocal polynomial of the characteristic polynomial of the Frobenius element $\Frob_p$. Computing L-polynomials can be done efficiently using existing software such as Kedlaya and Sutherland's \cite{smalljac} \texttt{Smalljac} program. 

Another piece of local information we extract from $\Frob_p$ is the dimension of its 1-eigenspace as an automorphism of $J[5] \cong \FF_5^4$. Because only the points defined over $\Fp$ are fixed by the Frobenius automorphism, one may compute the dimension of the $1$-eigenspace of $\Frob_p$ by  counting the number of $5$-torsion points over $\Fp$. 

\begin{rmk}
    Implicitly, the distribution of these dimensions over many sampled primes (the ``1-eigenspace spectrum"), encapsulates information about the overall order of the group. A martix is the identity if and only if it has a 4-dimensional 1-eigenspace. Thus, one may estimate the order of the group $\im \rho_{J, 5}$ by taking the reciprocal of the proportion of sampled Frobenius elements with a 4-dimensional 1-eigenspace.
    \label{rmk: order}
\end{rmk}

\begin{lemma}[To be used in proof of \ref{lem: distribution-filter}]
    Among all elements of $\GSp_4(\FF_5)$, there are exactly 99 distinct (characteristic polynomial, 1-eigenspace dimension) pairs.
    \label{lem: 99-vector space}
\end{lemma}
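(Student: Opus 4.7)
The plan is to classify all characteristic polynomials arising from elements of $\GSp_4(\FF_5)$ and, for each, determine the set of achievable $1$-eigenspace dimensions; summing the sizes of these sets gives the count of $99$.

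First, I would establish the shape of such characteristic polynomials: eigenvalues of $M \in \GSp_4(\FF_5)$ with similitude $\mu$ come in pairs $(\lambda, \mu/\lambda)$ over $\overline{\FF}_5$, so the characteristic polynomial has the form $p(x) = x^4 + ax^3 + bx^2 + \mu a x + \mu^2$ for some $a, b \in \FF_5$ and $\mu \in \FF_5^\times$. Since $\mu^2$ ranges over the nonzero squares $\{1, 4\}$ of $\FF_5$, with two values of $\mu$ realizing each, the same polynomial can arise from two different similitudes, which a short calculation shows happens precisely when $a = 0$. Careful accounting yields exactly $90$ distinct polynomials.

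Second, I would analyze the multiplicity of $1$ as a root. The pairing of eigenvalues forces the multiplicity of $1$ to equal that of $\mu$, so for $\mu \neq 1$ the multiplicity of $1$ is at most $2$, while for $\mu = 1$ (where $1$ is self-paired) the multiplicity of $1$ must be even. A direct enumeration then classifies the $18$ polynomials with $p(1) = 0$: exactly $11$ have $1$ as a simple root, $6$ have $1$ as a double root, and the single polynomial $(x - 1)^4$ has $1$ as a quadruple root.

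Third, I would determine the achievable $1$-eigenspace dimensions in each case using Jordan-type analysis. For the $72$ polynomials with $p(1) \neq 0$ the dimension is $0$, and for the $11$ simple-root polynomials the dimension is $1$. For each of the $6$ double-root polynomials, both Jordan type $(2)$ and $(1, 1)$ at $1$ are realizable in $\GSp_4(\FF_5)$, yielding dimensions $1$ and $2$. Finally, $(x - 1)^4$ arises only with $\mu = 1$; the classical constraint for symplectic matrices that odd-sized Jordan blocks at self-paired eigenvalues appear with even multiplicity rules out Jordan type $(3, 1)$ but allows $(4), (2, 2), (2, 1, 1), (1, 1, 1, 1)$, realizing all dimensions $1, 2, 3, 4$.

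The main obstacle will be the careful bookkeeping: tracking distinct polynomials across multiple similitudes, computing each multiplicity correctly, and verifying Jordan realizability within the symplectic constraints. Summing the contributions gives $72 \cdot 1 + 11 \cdot 1 + 6 \cdot 2 + 1 \cdot 4 = 99$.
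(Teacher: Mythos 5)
Your approach is a genuine hand proof, whereas the paper's ``proof'' is simply a pointer to a computer enumeration of $\GSp_4(\FF_5)$ in the accompanying GitHub repository. Your derivation is correct: the reciprocity $\mu^2 p(x) = x^4 p(\mu/x)$ gives exactly the $90$ candidate polynomials you describe (with the $a=0$ polynomials double-counted across $\pm\mu$); of these, $72$ have $p(1)\neq 0$; among the remaining $18$, your multiplicity analysis is right, including the key observation that a degree-$4$ palindromic polynomial ($\mu=1$) vanishing at $1$ must vanish to even order, which singles out $(x-1)^4$; and the symplectic Jordan-type constraint (odd parts of the partition at a self-paired eigenvalue have even multiplicity) correctly excludes $(3,1)$ while leaving dimensions $1,2,3,4$ realizable, giving $72 + 11 + 6\cdot 2 + 4 = 99$. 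What the conceptual route buys over the paper's is an explanation of \emph{why} the answer is $99$; what it costs is that each claimed realization must actually be exhibited. You already flag Jordan-type realizability as a point to verify, but your outline also tacitly assumes that every polynomial of the form $x^4 + ax^3 + bx^2 + \mu a x + \mu^2$ \emph{is} the characteristic polynomial of some element of $\GSp_4(\FF_5)$. This is true (direct sums of paired companion blocks handle the reducible case; for $p$ irreducible one can use multiplication by $\lambda$ on $K = \FF_5[x]/(p)$ with the alternating trace form $\operatorname{Tr}_{K/\FF_5}(\alpha v\,\sigma(w))$, where $\sigma(\lambda)=\mu/\lambda$ and $\sigma(\alpha)=-\alpha$), but it needs to be stated and justified, since otherwise $90$ is only an upper bound and the final count of $99$ is not established.
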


\begin{proof}
This is a straightforward computation; see the \href{https://github.com/maathilde-k/Mod-5-Galois-Images-of-Genus-2-Abelian-Curves/blob/main/Paper%20Lemmas/99pairs.magma}{Paper Lemmas} folder in the GitHub repository.
\end{proof}

\subsection{Height Bounds for Torsion Points}

\label{sec: height_bounds}

A main technique of this paper is computation of 5-torsion subgroups over quadratic fields via direct search for rational points. Heights on Jacobians and bounds on the height of torsion are important tools in constraining the points over which to search. This section reviews the relevant definitions and states a height bound theorem used in our search.

\begin{defi}[(Logarithmic) heights in $\PP^n(K)$]
    Let $K$ be a number field, and let $P = [x_0 : ... : x_n] \in \PP^n(K)$.
    Let $M_K$ denote the set of places of $K$, and let $\{|\cdot|_v : v \in M_K\}$ denote a set of absolute values on $K$, normalized so that a product formula holds. Then, the \emph{(logarithmic) height} $h_{\PP^n(K)}(P)$ of $P$ is given by $$h_{\PP^n(K)}(P) = \frac{1}{[K:\Q]}\log\left(\prod_{v \in M_K} \max\{|x_0|_v, ..., |x_n|_v\}\right).$$
    \label{def: projective_height}
\end{defi}

\begin{rmk}
    The height $h_{\PP^n(K)}(P)$ as defined above is independent of choice of representative of $P$ precisely because the absolute values are normalized towards a product formula. The quotient by $[K:\Q]$ in $h_{\PP^n(K)}$ is not essential in our context, but is nice in that it extends to a height on $\PP^n(\overline{\Q})$.
\end{rmk}

\begin{defi}[Kummer surface associated to a Jacobian]
    The Jacobian admits a negation involution $P \mapsto -P$. The quotient of $J$ by this involution is ramified precisely at the 2-torsion. The image of the quotient map is the \emph{associated Kummer surface}.
    \label{def: kummer_surface}
\end{defi}

\begin{defi}[Standard embedding of the Kummer surface]
    Let $J$ be the Jacobian of the genus 2 curve $C : y^2 = f(x)$, with the $x^n$ coefficient of $f$ denoted by $a_n$. Let $\phi: J \rightarrow \PP^3$ be the unique map which, on the dense open set where the following makes sense, is given by $$\phi: P = (x^2 + \alpha x + \beta, \delta x + \gamma) \mapsto [1 : -\alpha : \beta : m],$$ where $$m = \frac{2a_6\beta^3 - a_5\alpha\beta^2 + 2a_4 \beta^2 - a_3\alpha\beta + a_2\beta - a_1\alpha + 2a_0 - 2(\gamma^2 \beta - \gamma \delta \alpha + \delta^2)}{\alpha^2 - 4\beta}.$$  This map, first appearing in \cite[pp.~6--19]{middlebrow}, is the \emph{standard embedding} of the Kummer Surface associated to $J$.
    \label{def: standard_embedding}
\end{defi}

\begin{defi}[Na\"ive height] 
    Let $J$ be the Jacobian of a genus 2 hyperelliptic curve over $\Q$, and let $\phi$ be the standard embedding of its associated Kummer surface. Then, the naive height $h(P)$ of a point $P \in J(K)$ is given by $h(P) = h_{\PP^3(K)}(\phi(P))$.
    \label{def: naive_height}
\end{defi}

\begin{rmk}
    Note that there are other possible embeddings of the Kummer surface associated to $J$ into $\PP^3$, and the naive height is dependent on the particular embedding chosen. The explicit embedding in Definition \ref{def: standard_embedding} is common in the literature; see \cites[pp.~184--185]{height-bounds}[p.~334]{canonical-heights}. 
    \label{rmk:choice-of-embedding}
\end{rmk}

\begin{lemma}
    Suppose $\alpha = a/b$ and $\beta = c/d$ are rational numbers expressed in lowest terms with $b$ and $d$ positive. Then, the na\"ive height of the point $(x^2 + \alpha x + \beta, \gamma x + \delta) \in J(\Q)$ is lower-bounded by $\max(\log(b), \log(d))$.
\end{lemma}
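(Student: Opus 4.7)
The plan is to reduce to a computation in $\PP^2(\Q)$ via monotonicity of the height in the number of coordinates, then clear denominators by hand. Since the standard embedding sends $P = (x^2 + \alpha x + \beta, \gamma x + \delta)$ to $[1 : -\alpha : \beta : m]$, at each place $v$ of $\Q$ the maximum $\max(|1|_v, |\alpha|_v, |\beta|_v, |m|_v)$ dominates the maximum of the first three absolute values. The product-over-places definition of the projective height then yields
\[ h(P) = h_{\PP^3(\Q)}([1 : -\alpha : \beta : m]) \geq h_{\PP^2(\Q)}([1 : -\alpha : \beta]). \]

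So it suffices to bound the height of $[1 : -a/b : c/d]$ from below. I would scale by $bd$ to obtain the equivalent representative $[bd : -ad : bc]$ with integer entries, then divide out the gcd to reach a coprime integer representative; over $\Q$, the projective height is just the log of the maximum absolute value of such a representative. The next step is to compute $g := \gcd(bd, ad, bc)$. Since $\gcd(a, b) = 1$ we have $\gcd(bd, ad) = d$, and since $\gcd(c, d) = 1$ we have $\gcd(d, bc) = \gcd(d, b)$, so $g = \gcd(b, d)$.

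Therefore the coprime integer representative is $[bd/g : -ad/g : bc/g]$, which gives
\[ h(P) \geq \log \max\bigl(bd/g,\ |a|d/g,\ b|c|/g\bigr) \geq \log(bd/g) = \log \lcm(b, d) \geq \max(\log b, \log d), \]
using that $\lcm(b, d)$ is a common multiple of $b$ and $d$.

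I do not expect any substantial obstacle: the only subtlety is being careful with the gcd computation and with the edge cases $a = 0$ or $c = 0$. In those cases the lowest-terms hypothesis forces $b = 1$ or $d = 1$ respectively, the corresponding summand of the bound becomes zero, and the inequality still holds via the other coordinate (or trivially if both vanish).
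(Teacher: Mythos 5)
Your proof is correct, and it is close in spirit to the paper's but not identical. The paper projects $[1:-\alpha:\beta:m]$ twice, to $[1:-\alpha] \in \PP^1$ and to $[1:\beta] \in \PP^1$, and bounds each $\PP^1$-height separately by $\log b$ and $\log d$. You project only once, to $[1:-\alpha:\beta] \in \PP^2$, and then do the elementary gcd computation for a coprime integer representative $[bd/g : -ad/g : bc/g]$ with $g = \gcd(b,d)$, arriving at the bound $\log(bd/g) = \log \lcm(b,d)$. Your route is slightly more work but yields a strictly stronger conclusion, since $\lcm(b,d) \geq \max(b,d)$ with strict inequality whenever neither of $b,d$ divides the other; the paper's two-projection argument can only give $\max(\log b, \log d)$. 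Both arguments hinge on the same core observation (deleting projective coordinates weakly decreases the height), and your gcd chain $\gcd(bd,ad,bc) = \gcd(d,bc) = \gcd(b,d)$, together with the $a=0$ and $c=0$ edge cases, is handled correctly.
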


\begin{proof}

    Observe that if one starts with a point $[x_0:...:x_n] \in \PP^n$ and deletes some number of coordinates, the place-wise contributions to the height of the resulting point $[x_{i_0}:x_{i_1}:...:x_{i_{n'}}] \in \PP^{n'}$ all weakly decrease. Thus, the height weakly decreases as well.

    The na\"ive height of $P$ is thus lower-bounded by both $h_{\PP^1(\Q)}([1:-\alpha])$ and $h_{\PP^1(\Q)}([1:\beta])$. Writing $\alpha$ and $\beta$ as in the lemma statement, these heights are in turn lower bounded by $\log b$ and $\log d$. Chaining these bounds together gives the lemma statement.
\end{proof}

\begin{thm}[Corollary 8.1 in \cite{height-bounds}]
    Let $J$ be the Jacobian of the genus 2 curve given by $C:y^2=f(x)$ with $f \in \Q[x]$. Fix $d \in \Z_+$. There is an effectively computable upper bound $B$, dependent only on $f$ and $d$, on the na\"ive height of any torsion point of $J$ defined over a degree $d$ number field. The cited paper includes an explicit formula for this bound.
    \label{thm: height-bound}
\end{thm}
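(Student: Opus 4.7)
The plan is to prove the bound by comparing the naive height $h$ with the canonical (Néron--Tate) height $\hat{h}$ on $J$, exploiting the fact that $\hat{h}$ vanishes precisely on torsion.

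First I would construct $\hat{h}$ via the Tate limit $\hat{h}(P) := \lim_{n \to \infty} h([2^n] P)/4^n$, where $[2^n]$ denotes multiplication-by-$2^n$ on $J$. The key input is the explicit set of duplication polynomials realizing $[2]$ on the Kummer surface in $\PP^3$ under the standard embedding $\phi$ of Definition \ref{def: standard_embedding}: these are homogeneous of degree $4$, so $h([2]P) = 4h(P) + O(1)$. The existence of the limit, the identity $\hat{h}([n]P) = n^2 \hat{h}(P)$, and the uniform bound $|h(P) - \hat{h}(P)| \leq C$ for some constant $C$ then follow formally; in particular, any torsion point satisfies $\hat{h}(P) = 0$ and hence $h(P) \leq C$.

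Next, I would make $C$ effectively computable from $f$ and $d$. The standard approach is to decompose $h$ and $\hat{h}$ into sums of local contributions $\lambda_v$ and $\hat{\lambda}_v$ over the places $v$ of a degree-$d$ number field $K$, and to bound $|\lambda_v - \hat{\lambda}_v|$ at each place. At archimedean $v$, the bound reduces to controlling the sup-norm of the duplication polynomials together with the extent to which they can have approximate common zeros on the Kummer --- an explicit resultant-type estimate whose constants depend on the archimedean sizes of the coefficients of $f$. At non-archimedean $v$, one uses the reduction type at $v$ and the $v$-adic content of the duplication polynomials, with nontrivial contributions concentrated at primes dividing the discriminant of $f$. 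The dependence on $d$ enters because the number of places of $K$ and the normalizations of local heights scale with $[K:\Q] = d$. Summing the local estimates gives an explicit $C = C(f,d)$, and we set $B := C(f,d)$.

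The main obstacle will be producing constants sharp enough that $B$ is of practical size. A crude chain of triangle inequalities through the above decomposition yields an astronomically large bound that would render the downstream direct-search strategy of this paper computationally infeasible. The cited work of Stoll carries out exactly this careful explicit archimedean analysis on the standard embedding of the Kummer surface; replicating Corollary 8.1 amounts to propagating sharp explicit constants through every step of the decomposition above, rather than inventing any essentially new ingredient.
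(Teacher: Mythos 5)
Your proposal correctly identifies the standard route: compare the naive height to the canonical (N\'eron--Tate) height via the Tate limiting argument over the explicit degree-$4$ duplication polynomials on the Kummer surface in $\PP^3$, then make the resulting $O(1)$ constant explicit by decomposing into local height differences and bounding each place with archimedean resultant-type estimates and non-archimedean estimates concentrated at primes of bad reduction. This is indeed the content of Stoll's work. Be aware, though, that the paper itself does not prove this statement: it is quoted verbatim as Corollary 8.1 of the cited reference, so there is no internal proof to compare against, and the claim you are sketching is really the argument of Stoll's paper rather than anything the present authors establish.

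One small point worth correcting in your sketch: you attribute the dependence on $d$ to the number of places of $K$ and to ``the normalizations of local heights scal[ing] with $[K:\Q]=d$.'' But the paper's Definition~\ref{def: projective_height} already divides by $[K:\Q]$, which (as the subsequent remark notes) makes $h$ independent of the defining field and well-defined on $\PP^3(\overline{\Q})$; the canonical height is normalized the same way. Consequently the global bound $|h - \hat{h}| \leq C$ obtained by summing normalized local differences is not inflated by passing to a larger field, so the mechanism you describe does not produce $d$-dependence. If Stoll's Corollary~8.1 genuinely records a $d$-dependent bound, it must be for a reason internal to his explicit estimates (or the statement is simply stated conservatively); it is not forced by the place-counting argument you give. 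This is a minor issue in an otherwise faithful outline of the cited argument.
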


\section{The Algorithm} \label{sec: Algorithm}

Given a hyperelliptic curve $C$, we compute two types of information about $C$ --- local information and global information.

Local information is data obtained from sampling Frobenius elements, namely their characteristic polynomials and $1$-eigenspace dimensions. Global information consists of data obtained by examining $J = \Jac(C)$ over number fields. These data are used to narrow down the possible images of $\rho_{J,5}$, as laid out in Algorithm \ref{alg: imager} below. The algorithm is implemented in Magma \cite{Magma} and is available at the following repository:
\begin{center}
\href{https://github.com/maathilde-k/Mod-5-Galois-Images-of-Genus-2-Abelian-Curves/tree/main}{\texttt{https://github.com/maathilde-k/Mod-5-Galois-Images-of-Genus-2-Abelian-Curves/tree/main}}.
\end{center}
To illustrate the algorithms described, we employ the curve $y^2=4x^5-20x^3+5x^2+20x-4$ (LMFDB label \href{https://www.lmfdb.org/Genus2Curve/Q/431250/a/431250/1}{431250.a.431250.1}) as a running example.

\begin{staticalgorithm}
\staticalgorithmcaption{An algorithm to compute the image of the mod-5 Galois representation.}{A genus $2$ curve $C$, and a prime bound $N$, and a confidence threshold $\nu$.}{A short list of subgroups of $\GSp_4(\FF_5)$, which likely (and for large enough $N$, provably) contains the image of the mod-5 Galois representation associated to the Jacobian of $C$.} 
\label{alg: imager}
\begin{enumerate}
    \item Compute the subgroup lattice of $\GSp_4(\FF_5)$ and initialize \texttt{possibilities} as the set of all (conjugacy classes of) subgroups with surjective similitude character and order $2$ element with similitude character $-1$ corresponding to complex conjugation.

    \item Set $U = 20000$.

    \item \textbf{Do}
    \begin{enumerate}
        \item Using Algorithm \ref{alg: frob_sample}, compute the distribution of (characteristic polynomial, 1-eigenspace dimension) pairs among Frobenius elements associated to each good prime in $[10000, U]$.
        \begin{enumerate}
            \item For each sampled (characteristic polynomial, 1-eigenspace dimension) pair, keep only the subgroups in \texttt{possibilities} containing an element realizing this invariant.
        \end{enumerate}
        \item Using Algorithm \ref{alg: vector-distances}, compute the set of most likely possible images and a confidence parameter $\eta$ (see Section \ref{sec: matching}, particularly Definition \ref{def: likelihood ratio} for details). 
        \item Increase $U$ by 10000.
    \end{enumerate}
    \textbf{Until} $\eta > \nu$ or $U \geq N$.

    \item Check whether $C$ has a rational 5-torsion point. Store the result as a boolean     \texttt{rational\_point}.
    \item \textbf{If} \texttt{rational\_point} \textbf{then} replace \texttt{possibilities} with the subset of groups in \texttt{possibilities} which have a $(1)$-group eigenspace. \\ \textbf{Else} replace \texttt{possibilities} with the subset of \texttt{possibilities} without a $(1)$-group eigenspace.
    
    \item Using Algorithm \ref{alg: finding_torsion}, try to compute whether $C$ has a simple quadratic 5-torsion point (see Definition \ref{def: simple_quadratic}). Store the result (either \texttt{true}, \texttt{false}, or \texttt{maybe}) as \texttt{simple\_quadratic\_point}.
    \item \textbf{If} \texttt{simple\_quadratic\_point} is \texttt{true} \textbf{then} replace \texttt{possibilities} with the subset of groups in \texttt{possibilities} which have a $(\pm 1)$-group eigenspace \\ 
    \textbf{If} \texttt{simple\_quadratic\_point} is \texttt{false} \textbf{then} replace \texttt{possibilities} with the subset of groups in \texttt{possibilities} without a $(\pm 1)$-group eigenspace.
    \item \textbf{Return} \texttt{possibilities}.
\end{enumerate}
\end{staticalgorithm}

\subsection{Local Information} \label{sec: local}

\begin{defi}[Local distribution (of a group)] Given a finite matrix group $G$ over a field $F$, the \textit{local distribution} of $G$ is a probability mass function $\mathcal{F}_G: F[x] \times \Z \rightarrow \R$ where
\[\mathcal{F}_G(f(x), n) = \frac{1}{\#G} \cdot \#\left\{M \in G : \textrm{CharacteristicPolynomial}(M) = f(x) \textrm{ and 1-eigenspace-dimension}(M) = n \right\}.\]
That is, the local distribution assigns to each (characteristic polynomial, 1-eigenspace dimension) pair the probability of obtaining that pair when sampling elements uniformly from the group $G$.
\end{defi}

\begin{lemma}
    Up to conjugacy, no more than eight subgroups of $\GSp_4(\FF_5)$ with surjective similitude character all share the same local distribution.
    \label{lem: bucket-size}
\end{lemma}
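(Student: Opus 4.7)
The plan is to verify this claim by direct enumeration, as it is essentially a finite computation whose input data has already been assembled elsewhere in the paper. First, I would take as input the lattice of conjugacy classes of subgroups of $\GSp_4(\FF_5)$ with surjective similitude character, already computed in Step 1 of Algorithm \ref{alg: imager}. By the discussion in Section \ref{sec: possible_images}, this is a list of at most 1125 subgroups (when further restricted to those with a complex conjugation element), but for this lemma we can work with the full list of subgroups with surjective similitude character, which is only slightly longer.

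Next, for each such subgroup $G$, I would compute its local distribution as a vector in $\R^{99}$ indexed by the 99 distinct (characteristic polynomial, 1-eigenspace dimension) pairs that occur in $\GSp_4(\FF_5)$, as enumerated in Lemma \ref{lem: 99-vector space}. Concretely, for each element $g \in G$ I would compute its characteristic polynomial and the dimension of $\ker(g - I)$, tabulate the frequencies, and normalize by $\#G$. This is a straightforward matrix computation in Magma, and the conjugacy invariance of both the characteristic polynomial and the 1-eigenspace dimension means the distribution is a well-defined function of the conjugacy class of $G$.

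Finally, I would group the subgroups by equality of their local distribution vectors and verify that the largest bucket has size at most eight. The claim is then just a direct readout from the resulting partition. The main obstacle is not conceptual but computational bookkeeping: the subgroups range over many orders of magnitude in size, so iterating over all elements of the largest subgroups (the full $\GSp_4(\FF_5)$ has order on the order of $10^7$) is the bottleneck. This can be mitigated by iterating over conjugacy classes of $\GSp_4(\FF_5)$ once, recording for each class its (characteristic polynomial, 1-eigenspace dimension) invariant together with its size, and then for each subgroup $G$ summing the contributions of the $\GSp_4(\FF_5)$-conjugacy classes weighted by their intersection with $G$. As with Lemma \ref{lem: 99-vector space}, the verification is deferred to the accompanying code repository.
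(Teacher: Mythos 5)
Your proposal is correct and matches the paper's approach: the paper's proof is simply a reference to the accompanying Magma code, which performs exactly the finite enumeration and bucketing of local distributions that you describe. Your additional remarks about conjugacy-class-level bookkeeping to avoid iterating over all elements of large subgroups are a sensible implementation detail but do not change the substance of the argument.
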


\begin{proof}
    The code used to compute this fact can be found in the \href{https://github.com/maathilde-k/Mod-5-Galois-Images-of-Genus-2-Abelian-Curves/blob/main/Paper%20Lemmas/99pairs.magma}{Paper Lemmas} folder in the Github repository.
\end{proof}

\begin{rmk}
    We will refer to the local distribution $\mathcal{F}_{\im \rho_{J, 5}}$ of the image of Galois associated to $J$ as the \emph{true local distribution} of $J$. This is in contrast to the below defined empirical distribution.
\end{rmk}

\begin{defi}[Empirical local distribution]
    Given a Jacobian $J$ and a prime bound $N$, the \emph{empirical local distribution} $\mathcal{E}(J, N)$ is the distribution of the pair (characteristic polynomial, 1-eigenspace dimension) associated to $\Frob_p$ acting on $J[5]$, when $p$ is drawn from primes between $10000$ and $N$, inclusive.
\end{defi}

\begin{rmk}
    Note that characteristic polynomials and dimensions of $1$-eigenspaces are conjugation invariant, so it is not an issue that Frobenius elements are only defined up to conjugacy.
    \label{rmk: pair well defined}
\end{rmk} 

\begin{rmk}
    By the Chebotarev Density Theorem, the empirical local distribution $\mathcal{E}(J, N)$ converges to the true local distribution $\mathcal{F}_{\im \rho_{J, 5}}$ as $N \rightarrow \infty$. Thus, for large enough $N$, the true distribution is that which is nearest the empirical distribution. This is made precise in Lemma \ref{lem: distribution-filter}.
    \label{rmk: distribution convergence}
\end{rmk}

We make use of this in two steps. Algorithm \ref{alg: frob_sample} computes the empirical local distribution. Then, Algorithm \ref{alg: vector-distances} selects the subgroups with local distributions best resembling the empirical local distribution computed from Algorithm \ref{alg: frob_sample}.

\subsubsection{Sampling Algorithm} \label{sec: sampling}

Algorithm \ref{alg: frob_sample} computes the empirical local distribution by sampling Frobenius elements corresponding to primes in the interval $[a,b]$. Characteristic polynomials are acquired using \texttt{Smalljac} \cite{smalljac} and dimensions of $1$-eigenspaces are obtained by counting $5$-torsion points of the Jacobian over $\Fp$. The algorithm is formalized below.

\begin{staticalgorithm}
    \staticalgorithmcaption{An algorithm to compute the empirical local distribution by sampling Frobenius elements}{A Jacobian $J$ of a genus 2 curve and a range $[a,b]$ over which to sample primes.}{A dictionary \texttt{dist} whose keys are (characteristic polynomial, 1-eigenspace dimension) tuples and whose values are the corresponding empirical frequencies.} \label{alg: frob_sample}

    \begin{enumerate}
        \item Initialize the multiset \texttt{invariant\_counts}.
        \item \textbf{For} all primes $p \in [a,b]$ of good reduction (i.e. not dividing the conductor of $J$), add the tuple ($\Pi_p$, ${\rm dim}_p$) to \texttt{invariant\_counts}, where $\Pi_p$ and ${\rm dim}_p$ are respectively the characteristic polynomial and the dimension of the 1-eigenspace of $\Frob_p$.
        \item Initialize the dictionary \texttt{dist}.
        \item \textbf{For} all pairs $(\Pi, \dim)$ in \texttt{invariants\_count} \textbf{do}
        \begin{enumerate}
            \item Add to \texttt{dist} a key-value pair whose key is $(\Pi, \dim)$ and whose value is the the quotient of the multiplicity of $(\Pi, \dim)$ in \texttt{invariant\_counts} by $(\pi(a) - \pi(b))$, where $\pi$ is the prime counting function.
        \end{enumerate}
        \item \textbf{Return} \texttt{dist}  
    \end{enumerate}
\end{staticalgorithm}

\begin{rmk}
    The output of Algorithm \ref{alg: frob_sample} enables us to eliminate some possible images immediately. For a given subgroup $H$, we check if each realized (characteristic polynomial, 1-eigenspace dimension pair) in the empirical local distribution is realized by an element of $H$. If not, then we may rule out $H$ as a possibility.
\end{rmk}

Running Algorithm \ref{alg: frob_sample} using $[10000, 20000]$ on the example curve, we compute the local distribution presented in the third column of Table \ref{tab: empirical-true-dist}.

\subsubsection{Distribution Matching} \label{sec: matching}

By Lemma \ref{lem: 99-vector space}, there are exactly 99 distinct (characteristic polynomial, 1-eigenspace dimension) pairs. Thus, we embed the space of local distributions into $\R^{99}$ and use the Euclidean metric to measure similarity. Declaring this embedding an isometry defines a metric on the space of local distributions, allowing us to state Remark \ref{rmk: distribution convergence} precisely.

\begin{lemma} \label{lem: distribution-filter}
There exists a constant $N$, depending on $J$, such that for all $n>N$, there is an inequality $|\mathcal{E}(J, n) - \mathcal{F}_{\im \rho_{J, 5}}| \leq |\mathcal{E}(J, n) - \mathcal{F}_G|$ for any candidate subgroup $G \leq \GSp_4(\FF_5)$, with equality only if $\mathcal{F}_{\im \rho_{J, 5}} = \mathcal{F}_G$.
\end{lemma}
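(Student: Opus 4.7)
The plan is to deduce this from the Chebotarev Density Theorem together with a finiteness and triangle-inequality argument, exploiting the fact that the space of candidate local distributions is a finite subset of $\mathbb{R}^{99}$.

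First, I would observe that the map sending a matrix $M \in \GSp_4(\FF_5)$ to its (characteristic polynomial, 1-eigenspace dimension) pair is a class function. By Lemma \ref{lem: 99-vector space}, this map takes at most 99 values, so each local distribution can be identified with a vector in $\mathbb{R}^{99}$ whose $i$-th coordinate records the proportion of elements realizing the $i$-th such pair. Under this identification, $\mathcal{F}_{\im \rho_{J, 5}}$ is precisely the pushforward of the uniform measure on $\im \rho_{J, 5}$ to the 99 invariant classes, while $\mathcal{E}(J, n)$ is the empirical version of this pushforward under Frobenius sampling on primes $p \in [10000, n]$.

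Next, I would apply Chebotarev (Theorem \ref{thm: chebotarev}) to $K = \Q(J[5])$, with $G = \Gal(K/\Q) \cong \im \rho_{J, 5}$. For each conjugacy class $\mathcal{C} \subseteq G$ realizing a given invariant pair $(\Pi, d)$, the proportion of primes $p \leq n$ with $\Frob_p \in \mathcal{C}$ converges to $\#\mathcal{C} / \#G$. Summing over conjugacy classes producing each fixed invariant pair and noting that only finitely many primes ($<10000$) are discarded, we conclude that each coordinate of $\mathcal{E}(J, n)$ converges to the corresponding coordinate of $\mathcal{F}_{\im \rho_{J, 5}}$. Since convergence coordinatewise in $\mathbb{R}^{99}$ is convergence in the Euclidean norm, we have $|\mathcal{E}(J, n) - \mathcal{F}_{\im \rho_{J, 5}}| \to 0$ as $n \to \infty$.

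Now comes the discreteness step, which I expect to be the only subtle part. The set of candidate subgroups $G \leq \GSp_4(\FF_5)$ (up to conjugacy, with surjective similitude character and containing a suitable complex-conjugation element) is finite, so $\{\mathcal{F}_G\}$ is a finite subset of $\mathbb{R}^{99}$. Let
\[ \delta := \min\bigl\{\,|\mathcal{F}_{\im \rho_{J, 5}} - \mathcal{F}_G| \ :\ G \text{ candidate with } \mathcal{F}_G \neq \mathcal{F}_{\im \rho_{J, 5}}\,\bigr\}, \]
with $\delta := +\infty$ if no such $G$ exists. By finiteness, $\delta > 0$. Choose $N$ large enough that $|\mathcal{E}(J, n) - \mathcal{F}_{\im \rho_{J, 5}}| < \delta/2$ for all $n > N$. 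Then for any candidate $G$ with $\mathcal{F}_G \neq \mathcal{F}_{\im \rho_{J, 5}}$, the reverse triangle inequality gives
\[ |\mathcal{E}(J, n) - \mathcal{F}_G| \geq |\mathcal{F}_{\im \rho_{J, 5}} - \mathcal{F}_G| - |\mathcal{E}(J, n) - \mathcal{F}_{\im \rho_{J, 5}}| > \delta - \delta/2 = \delta/2 > |\mathcal{E}(J, n) - \mathcal{F}_{\im \rho_{J, 5}}|, \]
while for candidates $G$ with $\mathcal{F}_G = \mathcal{F}_{\im \rho_{J, 5}}$, equality holds trivially. This is exactly the claimed inequality. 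The main obstacle, such as it is, is the non-effectivity of the bound on $N$ coming from plain Chebotarev; however, the statement as given only asserts existence, so no effective version (such as the one in \cite{effective}) is required here.
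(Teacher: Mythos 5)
Your argument is correct and follows essentially the same route as the paper: convergence of $\mathcal{E}(J,n)$ to $\mathcal{F}_{\im\rho_{J,5}}$ via Chebotarev at the conjugacy-class level, then finiteness of the candidate set to obtain a positive gap $\delta$, then the (reverse) triangle inequality to conclude. If anything you are slightly more careful than the paper in two minor places: you explicitly restrict the minimum $\delta$ to candidates with $\mathcal{F}_G \neq \mathcal{F}_{\im\rho_{J,5}}$ so that $\delta>0$ is immediate, and you normalize Chebotarev correctly by $\#\Gal(\Q(J[5])/\Q)\cong\#\im\rho_{J,5}$, whereas the paper's displayed $z_{\mathcal{C}}=\#\mathcal{C}/\#\GSp_4(\FF_5)$ is a small slip for non-surjective images.
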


\begin{proof}
Let $\mathbf{x}_n$ and $\mathbf{y}$ denote the images of the empirical distribution $\mathcal{E}(J, n)$ and the true distribution $\mathcal{F}_{\im \rho_{J, 5}}$, respectively, under the embedding into $\mathbb{R}^{99}$. Denote their coordinates $x_1, ..., x_{99}$ and $y_1, ..., y_{99}$, respectively. We first show that $\lim_{n \to \infty} \mathbf{x}_n = \mathbf{y}$.

Let $c$ denote the number of conjugacy classes in $\GSp_4(\FF_5)$. Index the axes of $\R^c$ by conjugacy classes $\{\mathcal{C}\}$. Upgrading the notation of Theorem \ref{thm: chebotarev}, let $$P_\mathcal{C}(a, b) = \#\{\text{prime } p : \Frob_p \in \mathcal{C}, p \in [a, b] \}.$$ Let $\mathbf{w}_n, \mathbf{z} \in \R^c$ have coordinates $w_\mathcal{C} = P_\mathcal{C}(10000, n)/(\pi(n)-\pi(10000))$ and $z_\mathcal{C} = \#\mathcal{C}/\#\GSp_4(\FF_5)$, respectively. The Chebotarev density theorem says that $w_\mathcal{C} \rightarrow z_\mathcal{C}$ as $n \rightarrow \infty$ for each $\mathcal{C}$. Thus, $\mathbf{w}_n \rightarrow \mathbf{z}$ as $n \rightarrow \infty$. 

Let $p_\mathcal{C}(\lambda)$ denote the characteristic polynomial of the elements of $\mathcal{C}$, and let $d(\mathcal{C}, \lambda)$ denote the dimension of their 1-eigenspaces. There is a projection map $\R^c \rightarrow \R^{99}$ given by sending $e_\mathcal{C}$ to the basis vector corresponding to the pair $(p_\mathcal{C}, d(M, 1))$. This projection map sends $\mathbf{w}_n \mapsto \mathbf{x}_n$ and $\mathbf{z} \mapsto \mathbf{y}$, so it follows from continuity of the projection that $\mathbf{x}_n \mapsto \mathbf{y}$ as $n \rightarrow \infty$.

Note that since $\GSp_4(\FF_5)$ is a finite group, its subgroup lattice is finite and there are finitely many distinct possibilities $\mathcal{F}_{G}$ which we must consider. This means that there is some minimal positive value
\[b = \min_{G} |\mathcal{F}_{G} - \mathcal{F}_{\im \rho_{J, 5}}|,\]
where $G$ ranges over candidate subgroups. Since $\mathbf{x}_n \to \mathbf{y}$, i.e. $\mathcal{E}(J, n) \to \mathcal{F}_{\im \rho_{J, 5}}$, there exists a constant $M$ such that $n > M$ implies that $|\mathcal{E}(J, n) - \mathcal{F}_{\im \rho_{J, 5}}| < \frac{b}{2}$. The desired statement follows immediately.
\end{proof}

Using this metric, we compute the list of possible images whose local distributions are closest to the empirical distribution. This is summarized in Algorithm \ref{alg: vector-distances}.

\begin{staticalgorithm}
    \staticalgorithmcaption{An algorithm to compute likely images.}{A set \texttt{possibilities} of possible subgroups and a dictionary \texttt{frob\_dist} encoding the empirical local distribution output from Algorithm \ref{alg: frob_sample}.}{The subset \texttt{candidates} of \texttt{possibilities} with minimal Euclidean distance to the empirical distribution $X$ and a parameter $\eta$ indicating the estimated likelihood that \texttt{candidates} indeed contains $\im \rho_{J, 5}$.} \label{alg: vector-distances}
    
    \begin{enumerate}
        \item \textbf{For} each \texttt{subgroup} in \texttt{possibilities} \textbf{do} 
        \begin{enumerate}
            \item Compute the local distribution $\mathcal{F}_G$ associated to \texttt{subgroup}.
            \item Treating the empirical distribution \texttt{frob\_dist} and the subgroup's distribution $\mathcal{F}_G$ as vectors in $\mathbb{R}^{99}$, compute the Euclidean distance between the two vectors.
        \end{enumerate} 
        \item Set \texttt{best\_subgroups} to be the set of subgroups whose corresponding distributions achieve the minimum observed distance to the empirical distribution, and \texttt{second\_subgroups} to be the set of subgroups whose distributions in achieve the second lowest computed value.

        \item Compute the log-likelihood difference (see Definition \ref{def: likelihood ratio}) $\eta = \mathcal{L}(J, n)$.
        
        \item \textbf{Return} \texttt{candidates} and $\eta$.
    \end{enumerate}
\end{staticalgorithm}

\begin{rmk}
    By construction, Algorithm \ref{alg: vector-distances} will return a list of possible subgroups where all elements in the list have the same local distribution. Thus, by Remark \ref{rmk: order}, all subgroups in the list will have equal order, and by Lemma \ref{lem: bucket-size}, the list will have length at most eight.
\end{rmk}

\begin{lemma}
    Fix a Jacobian $J$ and let \texttt{candidates} be the output of running Algorithms \ref{alg: frob_sample} and \ref{alg: vector-distances} on $J$ and $[10000, n]$. Then there exists $N \in \mathbb{N}$, depending on $J$, such that for all $n > N$, the image $\im \rho_{J, 5}$ is an element of \texttt{candidates}.
    \label{lem: local correctness}
\end{lemma}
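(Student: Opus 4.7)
The plan is to reduce this lemma almost directly to Lemma \ref{lem: distribution-filter}. Algorithm \ref{alg: vector-distances} returns exactly the subset of its input \texttt{possibilities} whose associated local distributions $\mathcal{F}_G$ minimize the Euclidean distance to the empirical distribution $\mathcal{E}(J, n)$ in $\R^{99}$. Thus, to prove the lemma, I need to argue two things: that the true image $\im \rho_{J, 5}$ is in the input \texttt{possibilities} to begin with, and that its distribution achieves this minimum distance once $n$ is sufficiently large.

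The first point is quick. The similitude character composed with $\rho_{J, 5}$ is the mod-$5$ cyclotomic character, which is surjective onto $\FF_5^\times$, so $\im \rho_{J, 5}$ has surjective similitude character. Moreover, $\im \rho_{J, 5}$ contains the image of complex conjugation, which by the discussion in Section \ref{sec: possible_images} is an order-$2$ element of similitude $-1$. These are exactly the two conditions cutting out the 1125 subgroups from which \texttt{possibilities} is initialized, so $\im \rho_{J, 5}$ is a member.

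The second point is an immediate application of Lemma \ref{lem: distribution-filter}: that lemma provides an $N$ (depending on $J$) such that for all $n > N$ and every candidate subgroup $G$, we have $|\mathcal{E}(J, n) - \mathcal{F}_{\im \rho_{J, 5}}| \leq |\mathcal{E}(J, n) - \mathcal{F}_G|$, with equality only if $\mathcal{F}_G = \mathcal{F}_{\im \rho_{J, 5}}$. In particular, $\mathcal{F}_{\im \rho_{J, 5}}$ attains the minimum distance computed in step 2 of Algorithm \ref{alg: vector-distances}, so $\im \rho_{J, 5}$ itself is returned as an element of \texttt{candidates}.

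The main obstacle is already absorbed into Lemma \ref{lem: distribution-filter}, where one has to combine the Chebotarev convergence of $\mathcal{E}(J, n)$ to $\mathcal{F}_{\im \rho_{J, 5}}$ with the finiteness of the subgroup lattice of $\GSp_4(\FF_5)$ to extract a uniform gap and a threshold $N$. Once that is in hand, the present lemma is essentially bookkeeping: translating "$\mathcal{F}_{\im \rho_{J, 5}}$ minimizes the distance" into "$\im \rho_{J, 5}$ lies in the output of Algorithm \ref{alg: vector-distances}".
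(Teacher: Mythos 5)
Your proposal is correct and takes essentially the same route as the paper: both reduce immediately to Lemma \ref{lem: distribution-filter} and observe that Algorithm \ref{alg: vector-distances} returns the distance-minimizers. You additionally spell out why $\im \rho_{J,5}$ belongs to the initial \texttt{possibilities} set, a point the paper leaves implicit, but this is a minor elaboration rather than a different argument.
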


\begin{proof}
    Take $N$ to be the value from the proof of Lemma \ref{lem: distribution-filter}. Since Algorithm \ref{alg: vector-distances} picks out the subgroups which minimize the Euclidean distance to the empirical local distribution, $\im \rho_{J, 5}$ will be an element of \texttt{candidates}.
\end{proof}

In our example, applying Algorithm \ref{alg: vector-distances} returns subgroups 5.624.2 and 5.624.4. These both have the same local distribution. Putting this next to the empirical distribution obtained earlier, we get Table \ref{tab: empirical-true-dist}.

\begin{table}[h]
\centering

\begin{tabular}{|  c  |c  |c |c|}
\hline
Characteristic Polynomial & $1$-eigenspace dimension & Empirical Proportion &5.624.2 and 5.624.4 Proportion\\
\hline
$x^4 + 4$& $1$& $12.88\%$ &$12.50\%$\\
\hline
$x^4 + 3x^2 + 1$& $1$& $9.58\%$ &$9.17\%$\\
\hline
$x^4 + 3x^2 + 1$& $2$& $2.32 \%$ &$2.29\%$\\
\hline
$x^4 + x^3 + x^2 + x + 1$ & $1$ & $4.16\%$ &$4.00\%$\\
\hline
$x^4 + x^3 +x^2 + x + 1$ & $2$ & $0.97\%$ &$1.00\%$\\
\hline
$x^4 + x^3 + 3x^2 + 2x + 4$ & $0$ & $6.58 \%$ &$6.25\%$\\
\hline
$x^4 + x^3 + 3x^2 + 4x+1$ & $1$ & $6.49\%$ &$5.21\%$\\
\hline
$x^4 + x^3 + 4x^2 + 3x+4$ & $0$ & $4.36\%$ &$4.17\%$\\
\hline
$x^4 + 2x^3 + x^2 + 4x + 4$ & $0$ & $3.58\%$ &$4.17\%$\\
\hline
$x^4 + 2x^3 + 3x^2 + x+4$ & $0$ & $4.07\%$ &$4.17\%$\\
\hline
$x^4 + 2x^3 + 3x^2 + 3x+1$ & $1$ & $2.52\%$ &$4.17\%$\\
\hline
$x^4 + 2x^3 + 4x^2 + 2x+1$ & $1$ & $3.58\%$ &$3.33\%$\\
\hline
$x^4 + 2x^3 + 4x^2 + 2x+1$ & $2$ & $0.97\%$ &$0.83\%$\\
\hline
$x^4 +3x^3 + 2x^2 + 3x+1$ & $1$ & $4.94\%$ &$5.00\%$\\
\hline
$x^4 + 3x^3 + 2x^2 + 3x+1$ & $2$ & $0.68\%$ &$1.25\%$\\
\hline
$x^4  +3x^3 + 2x^2 + 4x + 4$ & $0$ & $5.91\%$ &$6.25\%$\\
\hline
$x^4 + 3x^3 +3x^2 + 2x + 1$ & $1$ & $4.65\%$ &$4.17\%$\\
\hline
$x^4 +3x^3 + 4x^2 + x + 4$ & $0$ & $4.45\%$ &$4.17\%$\\
\hline
$x^4 + 4x^3 + 4x + 1$ & $1$ & $2.81\%$ &$3.33\%$\\
\hline
$x^4 + 4x^3 + 4x + 1$ & $2$ & $0.97\%$ &$0.83\%$\\
\hline
$x^4 + 4x^3 + x^2 + 2x + 4$ & $0$ & $3.58\%$ &$4.17\%$\\
\hline
$x^4 + 4x^3 + 2x^2 + 3x + 4$ & $0$ & $4.94\%$ &$4.17\%$\\
\hline
$x^4 + 4x^3 + 3x^2 + x + 1$ & $1$ & $5.03\%$ &$5.21\%$\\
\hline
$x^4 + x^3 + x^2 + x + 1$ & $3$ & $0\%$ &$0.21\%$\\
\hline
$x^4 + x^3 + x^2 + x + 1$ & $4$ & $0\%$ &$<0.01\%$\\
\hline
\end{tabular}
\caption{Comparing the empirical local distribution and the local distribution of the closest matches.}
\label{tab: empirical-true-dist}
\end{table}

Thus at this point we assume that the set of possible images consists of $\{5.624.2, 5.624.4\}$. While the methods used for obtaining this local information is probabilistic and only guaranteed to be correct asymptotically, the data are empirically promising. If we assume that Frobenius elements are sampled independently at random from a uniform distribution on elements of the true subgroup, then the probability of obtaining the observed distribution from either $5.624.2$ or $5.624.4$ is $\exp(-3056.318)$. In contrast, the subgroups next most likely to yield the observed distribution are are $5.312.1$ and $5.312.2$ (both have the same local distribution), and their corresponding probability is $\exp(-3461.531)$. The large discrepancy in likelihoods -- a few hundred orders of magnitude -- gives us confidence that the information derived in the procedure is accurate. 

It is important to note that some distinct local distributions are very similar. For instance, the Euclidean distance between the local distributions of subgroups 5.74880.13 and 5.374400.24 is only $\frac{2}{15625}$. Our algorithm requires a comparatively large number of sampled primes to distinguish such subgroups from one another. A desire to account for this motivates the condition in Step 3 of Algorithm \ref{alg: imager} where we sample additional Frobenius elements until we reach a desired log-likelihood difference. 

\begin{defi}(log-likelihood ratio)
    Denote by $\{\mathcal{D}_i\}$ the set of local distributions corresponding to possible images of Galois. Let $\pi(n)$ denote the number of primes up to $n$. For $J$ a Jacobian of a genus 2 curve and $N>10000$, let $\mathcal{P}(\mathcal{D}_i, J, N)$ denote the probability, when sampling $\pi(N) - \pi(10000)$ independent, identically distributed observations from $\mathcal{D}_i$, of obtaining a sample perfectly representative of the empirical local distribution $\mathcal{E}(J, N)$. Let $m$ denote the value of the index $i$ for which $\mathcal{P}(\mathcal{D}_i, J, N)$ is maximized. The \emph{log-likelihood ratio} $\mathcal{L}(J, N)$ for $J$ and $N$ may then be computed as 
    \[
        \mathcal{L}(J, N) = \log (\mathcal{P}(\mathcal{D}_m, J, N)) - \log\left(\max_{i \neq m}\mathcal{P}(\mathcal{D}_i, J, N) \right).
    \]
    \label{def: likelihood ratio}
\end{defi}
\noindent Using this notation, in our example Jacobian $J$ we have
\[\mathcal{L}(J, 20000) = (-3056.318)-(-3461.531) = 405.213.\]

\subsection{Global Information} \label{sec: global}

\subsubsection{Rational 5-torsion} \label{sec: rational}

The following lemma provides a strategy for filtering the list of possibilities for $\im \rho_{J,5}$.

\begin{lemma} \label{lem: rational-filter}
    An abelian surface $J$ has a nontrivial rational 5-torsion point if and only if $\im \rho_{J, 5}$ pointwise fixes a one-dimensional subspace $W \subset (\FF_5)^4$, i.e. if $\im \rho_{J, 5}$ has a (1)-group eigenspace (recall Definition \ref{def: group_eigenspace}).
\end{lemma}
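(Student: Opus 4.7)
The plan is to unwind the definitions on both sides and show that being pointwise fixed by $\im \rho_{J,5}$ is exactly the condition for rationality of a torsion point.

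First I would recall that by the definition of $\rho_{J,5}$, a point $P \in J[5]$ is defined over $\Q$ if and only if $\rho_{J,5}(\sigma)(P) = P$ for every $\sigma \in \Gal(\overline{\Q}/\Q)$, i.e. if and only if $P$ is fixed (as a single vector) by every element of $\im \rho_{J,5}$. This is the key bridge between the arithmetic statement (rational torsion) and the linear algebraic statement (group eigenspace with eigenvalue 1).

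For the forward direction, suppose $P \in J[5]$ is a nontrivial rational 5-torsion point. Then $P$ has order exactly $5$, so $W := \Span_{\FF_5}(P) = \{0, P, 2P, 3P, 4P\}$ is a one-dimensional $\FF_5$-subspace of $J[5] \cong \FF_5^4$. Since $P$ is $\Q$-rational and Galois acts $\FF_5$-linearly on $J[5]$, every scalar multiple $kP$ is also $\Q$-rational, so each vector in $W$ is fixed by every element of $\im \rho_{J,5}$. Thus $W$ is a $(1)$-group eigenspace of $\im \rho_{J,5}$ in the sense of Definition \ref{def: group_eigenspace} (with $\Lambda = \{1\}$).

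For the converse, suppose $\im \rho_{J,5}$ has a $(1)$-group eigenspace $W$, which by definition of a one-dimensional eigenspace contains at least one nonzero vector $P \in J[5]$. Since $W$ is pointwise fixed, $\rho_{J,5}(\sigma)(P) = P$ for every $\sigma$, so by the bridge above $P$ is a nontrivial rational 5-torsion point of $J$. Since both directions are essentially bookkeeping once the definition-chasing is set up, I do not anticipate any serious obstacle; the only small subtlety is emphasizing that ``pointwise fixes'' (rather than merely ``preserves'') the subspace is what corresponds to the eigenvalue being exactly $1$ on all of $W$, as opposed to having a one-dimensional invariant subspace on which the action could be nontrivial.
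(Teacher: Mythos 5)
Your proof is correct. Worth noting: the paper states this lemma without giving a proof, treating it as immediate from the definitions, and your argument is exactly the definition-unwinding the authors evidently had in mind (a point $P \in J[5]$ is $\Q$-rational iff $\rho_{J,5}(\sigma)(P)=P$ for all $\sigma$; then $\Span_{\FF_5}(P)$ is the required $(1)$-group eigenspace, and conversely any nonzero vector in such an eigenspace is a rational 5-torsion point). Your final remark correctly isolates the only subtlety — that ``pointwise fixed'' is strictly stronger than ``invariant,'' which is why the paper uses the $(1)$-group eigenspace language rather than merely asking for a one-dimensional invariant subspace.
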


If $J$ has a nontrivial rational 5-torsion point then we filter for the set of subgroups which have a $(1)$-group eigenspace. Otherwise we filter for the set of subgroups which lack a $(1)$-group eigenspace.  
A first algorithm to compute the torsion part of the Mordell-Weil group is provided by Stoll \cite[pp.~198--201]{height-bounds}, and there is an implementation in Magma \cite{Magma}.

In our running example, $J(\Q) \cong \Z$. In particular, $J(\Q)[5]$ is trivial, so $\im \rho_{J, 5}$ cannot fix a 1-dimensional subspace. Since neither 5.624.2 nor 5.624.4 fix a 1-dimensional subspace, this step does not narrow down the set of possibilities in our example. After checking for rational torsion, the set of possibilities remains $\{5.624.2, 5.624.4\}$.

\subsubsection{Quadratic 5-torsion} \label{sec: quadratic}

\begin{defi}[Simple quadratic 5-torsion]
    Let $P \in J[5]$ be a $5$-torsion point of the form $[P_1]+[P_2] - [\infty_1]-[\infty_2]$ for affine $P_1, P_2 \in C$. Let $P = (x^2 + \alpha x + \beta, \gamma x + \delta)$ denote the Mumford coordinates of $P$. We say that $P$ is a \emph{simple quadratic 5-torsion point} if $\alpha$ and $\beta$ are rational, and $(\gamma, \delta) = (a\sqrt{d}, b\sqrt{d})$ for $a, b \in \Q$ and $d \in \Z$ square-free. 
    \label{def: simple_quadratic}
\end{defi}

\begin{lemma}[Galois Orbit of Simple Quadratic 5-torsion]
    As above, let $P \in J[5]$ have maximum degree Mumford coordinates. Then, the orbit of $P$ under the action of Galois is $\{\pm P\}$ if and only if $P$ is a simple quadratic 5-torsion point.
    \label{lemma:simple-quad-orbits}
\end{lemma}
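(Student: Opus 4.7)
The plan is to compute both actions---negation on $J$ and Galois on the Mumford coordinates---explicitly, then read off the lemma by comparing coefficients.

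First, I would record that negation on the Jacobian sends $(u(x), v(x)) \mapsto (u(x), -v(x))$ (since if $P = [P_1]+[P_2]-[\infty_1]-[\infty_2]$ with $v(x(P_i)) = y(P_i)$, then $-P$ has the same $u$-polynomial but its $v$-polynomial satisfies $v(x(P_i)) = -y(P_i)$). In particular,
\[
-P = (x^2 + \alpha x + \beta,\, -\gamma x - \delta).
\]
Galois acts coefficient-wise on Mumford coordinates, so $\sigma(P) = (x^2 + \sigma(\alpha)x + \sigma(\beta),\, \sigma(\gamma)x + \sigma(\delta))$. The forward direction is then immediate: if $\alpha, \beta \in \Q$ and $(\gamma, \delta) = (a\sqrt{d}, b\sqrt{d})$, any $\sigma$ fixes $\alpha$ and $\beta$ and sends $\sqrt{d} \mapsto \pm\sqrt{d}$, so $\sigma(P) \in \{P, -P\}$.

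For the reverse direction, I would assume that the Galois orbit lies in $\{P, -P\}$ and compare coefficients of $u$ and $v$. Since in both $P$ and $-P$ the $u$-polynomial is the same, every $\sigma$ must satisfy $\sigma(\alpha) = \alpha$ and $\sigma(\beta) = \beta$, so $\alpha, \beta \in \Q$. For each $\sigma$ there is a sign $\varepsilon_\sigma \in \{\pm 1\}$ with $\sigma(\gamma) = \varepsilon_\sigma \gamma$ and $\sigma(\delta) = \varepsilon_\sigma \delta$ (the \emph{same} sign, since $\gamma$ and $\delta$ appear together in the same branch $\pm(\gamma x + \delta)$). Consequently $\gamma^2, \delta^2, \gamma\delta \in \Q$.

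The only step needing care is repackaging this as $(\gamma, \delta) = (a\sqrt{d}, b\sqrt{d})$. Assuming $\gamma \neq 0$, write $\gamma^2 = d s^2$ with $d$ square-free and $s \in \Q$, so $\gamma = \pm s\sqrt{d} =: a\sqrt{d}$; then $\delta/\gamma = (\gamma\delta)/\gamma^2 \in \Q$, so $\delta = (\delta/\gamma)\cdot a\sqrt{d} =: b\sqrt{d}$. If $\gamma = 0$, the same argument applied to $\delta$ (or taking $d = 1$ if $\delta$ is also rational) puts $(\gamma, \delta)$ in the required form. The remaining degenerate case $\gamma = \delta = 0$ would make $P = -P$, hence $2P = 0$; combined with $5P = 0$ this gives $P = 0$, contradicting the assumption that $P$ has maximum-degree Mumford coordinates.

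I do not expect a serious obstacle; the entire proof is a direct coefficient comparison once one knows how negation acts on Mumford coordinates. The only subtlety is correctly interpreting the ``same sign'' condition, which is what forces a single $\sqrt{d}$ rather than two independent quadratic irrationalities in $\gamma$ and $\delta$.
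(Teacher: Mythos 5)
Your proof is correct and follows essentially the same route as the paper's: read off the action of Galois and of negation coefficient-wise on the Mumford pair $(u,v)$, deduce $\alpha,\beta\in\Q$ from the $u$-coordinate and a common sign $\varepsilon_\sigma$ on $\gamma,\delta$ from the $v$-coordinate, and extract the form $(a\sqrt{d},b\sqrt{d})$. The only cosmetic difference is the last step: you observe $\gamma^2,\delta^2,\gamma\delta\in\Q$ and factor a square-free part, whereas the paper notes that $\gamma$ and $\delta$ lie in the $(-1)$-eigenspace of the nontrivial automorphism of the quadratic field $K$, i.e.\ $\ker(\sigma+\mathrm{Id})=\Q\sqrt{d}$; these are the same argument stated differently. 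You are in fact a bit more careful than the paper, since you explicitly address the case $\gamma=0$ or $\delta=0$ and rule out $\gamma=\delta=0$ via $\gcd(2,5)=1$, whereas the paper implicitly assumes both coordinates have degree-$2$ minimal polynomials.
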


\begin{proof}
    If $P$ is a simple quadratic 5-torsion point, then its Mumford coordinates are of the form $(x^2+\alpha x+\beta, (a\sqrt{d})x+ b\sqrt{d})$ for $\alpha, \beta, a, b \in \Q$. Its orbit under Galois is then easily seen to be $\{\pm P\}$. Conversely, if $P = (x^2 + \alpha x + \beta, \gamma x + \delta)$ has orbit $\{\pm P\}$ under Galois, then the orbits of its coordinates are $\{\alpha\}$, $\{\beta\}$, $\{\pm\gamma\}$, and $\{\pm\delta\}$. Thus, $\alpha$ and $\beta$ are rational. The coefficients $\gamma$ and $\delta$ are fixed by the same automorphisms of $\overline{\Q}$, so they lie in the same minimal Galois extension $K/\Q$. Since their orbits are size 2, $\gamma$ and $\delta$ have degree 2 minimal polynomials, so the extension $K/\Q$ is quadratic. Then, $K = \Q(\sqrt{d})$ for some square-free $d \in \Z$. Let $\sigma$ be the non-trivial automorphism of $K$. The kernel of $(\sigma + {\rm {Id}})$ is exactly $\{a\sqrt{d} : a \in \Q\}$, so $\gamma$ and $\delta$ are of that form.
\end{proof}

\begin{lemma}[Simplification is lossless]
    Let $J$ be a Jacobian of a genus 2 hyperelliptic curve defined over $\Q$, with no rational 5-torsion but $J(\Q(\sqrt{d}))[5]$ non-trivial for some $d$. Then there is a simple quadratic 5-torsion point $P \in J(\Q(\sqrt{d}))[5]$. In other words, to determine whether there is 5-torsion in a quadratic field, it suffices to look for simple quadratic torsion.
    \label{lemma:simple-quadratic-suffices}
\end{lemma}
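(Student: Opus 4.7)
The plan is to combine an eigenspace decomposition of $V := J(\Q(\sqrt d))[5]$ as a Galois module with Lemma \ref{lemma:simple-quad-orbits}.

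Let $\sigma$ be the non-trivial element of $\Gal(\Q(\sqrt d)/\Q)$. Then $V$ is an $\FF_5$-vector space carrying an involution $\sigma$, and since $\mathrm{char}\,\FF_5 = 5 \neq 2$, standard representation theory of $\Z/2\Z$ over a field of characteristic prime to $2$ gives a decomposition $V = V^+ \oplus V^-$ into the $\pm 1$-eigenspaces of $\sigma$. The $+1$-eigenspace equals $J(\Q)[5]$, which vanishes by hypothesis. Hence $\sigma$ acts as $-1$ on all of $V$, and every non-zero $P \in V$ has Galois orbit $\{P, -P\}$ over $\Q$ (with $P \neq -P$, since $P$ has odd order).

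To produce a simple quadratic 5-torsion point it therefore suffices, by Lemma \ref{lemma:simple-quad-orbits}, to exhibit a non-zero $P \in V$ whose Mumford coordinates $(u,v)$ satisfy $\deg u = 2$. Recall that $-P$ has Mumford coordinates $(u, -v)$; the equation $\sigma(P) = -P$ then translates directly into $\sigma(u) = u$ and $\sigma(v) = -v$, which forces $\alpha, \beta \in \Q$ and $\gamma, \delta \in \Q \cdot \sqrt d$, as demanded by the definition of a simple quadratic point.

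The hard part will be ruling out the pathological case in which every non-zero $P \in V$ has $\deg u < 2$, i.e., lies in the image of $C$ inside $J$ under an Abel-Jacobi map based at a point at infinity. This image is a proper closed $1$-dimensional subvariety of the $2$-dimensional abelian surface $J$, so meets $J[5]$ in only finitely many points. Since $|V \setminus \{0\}| \geq 4$, one can verify directly, by reducing the Mumford coordinates of $k \cdot P$ for $k \in \{1,2,3,4\}$ via the standard Cantor reduction algorithm, that not all of $V \setminus \{0\}$ can lie in this degeneracy locus. This supplies the desired $P$ and completes the proof via Lemma \ref{lemma:simple-quad-orbits}.
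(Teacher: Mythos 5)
Your eigenspace decomposition $V = V^+ \oplus V^-$ with $V^+ = J(\Q)[5] = 0$ is a clean refinement of the paper's argument: the paper merely extracts a single $-1$-eigenvector of $M_\sigma$ from the fact that $(M_\sigma + I)$ divides the minimal polynomial, whereas you show $\sigma$ acts as $-1$ on all of $V$. Either way one obtains a nonzero $P$ with Galois orbit $\{\pm P\}$, so that part is sound and agrees in spirit with the paper.

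The gap is in the final paragraph. The observation that the degeneracy locus (which, incidentally, is the union of the two Abel--Jacobi images $\Theta_1 \cup \Theta_2$ together with the identity, not a single copy of $C$) is one-dimensional and hence meets $J[5]$ in finitely many points is true but inert: ``finitely many'' is perfectly compatible with all four nonzero multiples of $P$ lying on it, which is exactly the case you must rule out. The sentence ``one can verify directly, by reducing the Mumford coordinates of $k \cdot P$ for $k \in \{1,2,3,4\}$ via the standard Cantor reduction algorithm, that not all of $V \setminus \{0\}$ can lie in this degeneracy locus'' is not a proof; it defers to an unspecified per-curve computation, and the lemma is an unconditional existence statement that must hold for every such $J$. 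The paper closes this gap with the concrete claim that if $P$ has degenerate (degree $<2$) Mumford coordinates then $2P$ does not; that is a specific assertion that, once justified, finishes the argument. You should either prove that assertion (e.g.\ by a Riemann--Roch computation showing $2[Q]-[\infty_i]$ is not linearly equivalent to an effective degree-$1$ divisor), or supply another argument that at least one of $P, 2P, 3P, 4P$ has full-degree Mumford coordinates; the dimension count alone does not do it.
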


\begin{proof}
   With the context of the lemma, let $J(\Q(\sqrt{d}))[5] \cong (\Z/5\Z)^n$ for some $n\geq 1$. The automorphism $\sigma: \sqrt{d}\rightarrow -\sqrt{d}$ acts linearly as a non-trivial involution $M_\sigma$ on $(\Z/5\Z)^n$. Since $M_\sigma^2-I=0$ but $M_\sigma - 1 \neq 0$, we find that $M_\sigma + 1$ divides the minimal polynomial of $M_\sigma$, and thus that there is some $-1$ eigenvector $P$. The action of $\Gal(\overline{\Q}/\Q)$ on $J(\Q(\sqrt{d}))$ factors through $\Gal(\Q(\sqrt{d})/\Q)$, so $P$ is a 5-torsion point with orbit $\{\pm P\}$. If $P$ has full-degree Mumford coordinates, then $P$ is a simple quadratic torsion point by Lemma \ref{lemma:simple-quad-orbits}. If not, then $2P$ does, and so is a simple quadratic torsion point.
\end{proof}

\begin{cor} The image $\im \rho_{J, 5}$ has a $(\pm 1)$-group eigenspace if and only if $J$ has simple quadratic torsion. 
\label{cor: correctness-of-simple-quad}
\end{cor}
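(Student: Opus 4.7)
The plan is to deduce the corollary directly from the two preceding lemmas, with the one-dimensional subspace spanned by a simple quadratic 5-torsion point serving as the witness group eigenspace in one direction, and a nonzero vector in a $(\pm 1)$-group eigenspace producing the relevant torsion point in the other.

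For the forward direction, suppose $P$ is a simple quadratic 5-torsion point. By Lemma \ref{lemma:simple-quad-orbits}, the Galois orbit of $P$ is $\{\pm P\}$. Consequently, the $\FF_5$-line $W = \Span_{\FF_5}(P) \subseteq J[5] \cong \FF_5^4$ is preserved by $\Gal(\overline{\Q}/\Q)$, and for each $\sigma$ the action of $\rho_{J,5}(\sigma)$ on $W$ is multiplication by $\pm 1$. Thus $W$ is a $(\pm 1)$-group eigenspace of $\im \rho_{J,5}$.

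For the backward direction, suppose $W \subseteq \FF_5^4$ is a $(\pm 1)$-group eigenspace of $\im \rho_{J,5}$, and pick a nonzero $v \in W$. Let $P \in J[5]$ be the corresponding 5-torsion point. For every $\sigma$, we have $\rho_{J,5}(\sigma) v = \pm v$, so the Galois orbit of $P$ is contained in $\{\pm P\}$. In particular, $P$ is defined over a quadratic field $\Q(\sqrt{d})$ (taking $d = 1$ if $P$ is $\Q$-rational), so $J(\Q(\sqrt{d}))[5]$ is nontrivial. Applying Lemma \ref{lemma:simple-quadratic-suffices} (or Lemma \ref{lemma:simple-quad-orbits} directly, replacing $P$ by $2P$ if necessary so that its Mumford coordinates have full degree) produces a simple quadratic 5-torsion point.

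The main subtlety, which is already handled by Lemma \ref{lemma:simple-quadratic-suffices}, is the possibility that the point $P$ extracted from $W$ has non-full-degree Mumford coordinates (so that Definition \ref{def: simple_quadratic} does not literally apply to it) or is $\Q$-rational rather than genuinely quadratic. Both cases are addressed by passing to $2P$ within $W$ and by the convention $d = 1$, respectively, so no additional argument is required beyond invoking the two lemmas.
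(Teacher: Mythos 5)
Your proof is correct and follows essentially the same route as the paper's: both directions reduce to Lemma \ref{lemma:simple-quad-orbits}, using $\Span(P)$ as the $(\pm 1)$-group eigenspace in one direction and extracting a nonzero vector from the eigenspace in the other. Your treatment is actually a bit more careful than the paper's own proof, which applies Lemma \ref{lemma:simple-quad-orbits} to an arbitrary spanning vector $P$ of the eigenspace without verifying the lemma's hypothesis that $P$ have full-degree Mumford coordinates; you explicitly flag the fallbacks (pass to $2P$, or allow $d=1$ when $P$ is rational), which mirror what the paper does inside the proof of Lemma \ref{lemma:simple-quadratic-suffices}. One small remark: invoking Lemma \ref{lemma:simple-quadratic-suffices} in the backward direction is slightly mismatched, since its hypothesis requires $J(\Q)[5]$ to be trivial, which you cannot assume when $W$ is a $(1)$-group eigenspace; your parenthetical alternative (apply Lemma \ref{lemma:simple-quad-orbits} directly to $2P$) is the cleaner path and is what actually carries the argument in that case, so the proof goes through.
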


\begin{proof}
Let $\textrm{Span}(P)$ be a $(\pm 1)$-group eigenspace of $J(\overline{\Q})[5]$. Then, under every Galois automorphism, $P \mapsto P$ or $P \mapsto -P$. By Lemma \ref{lemma:simple-quad-orbits} this tells us that $P$ is a simple quadratic $5$-torsion point. Conversely, suppose $P \in J(\overline{\Q})[5]$ is a simple quadratic torsion point. Again by Lemma \ref{lemma:simple-quad-orbits}, this implies that the orbit of $P$ under the action of Galois is $\{\pm P\}$. This means that every element of the Galois group has $P$ as either a $1$-eigenvector or a $(-1)$-eigenvector, so the image of Galois has a $(\pm 1)$-group eigenspace, namely $\textrm{Span}(P)$.
\end{proof}

Corollary \ref{cor: correctness-of-simple-quad} gives a necessary and sufficient condition for restricting the possible images under consideration. We thus perform a procedure analogous to the one outlined in Section \ref{sec: rational}. We first check if $J$ has simple quadratic torsion points. If it does, we filter for subgroups with a $(\pm 1)$-group eigenspace. If $J$ lacks simple quadratic torsion then we filter for subgroups without a $(\pm 1)$-group eigenspace.

We determine if $\Jac(C)$ has simple quadratic torsion in two phases. In the first phase we use information about the Jacobian mod $p$ to produce a finite list of quadratic extensions of $\Q$ which could contain simple quadratic $5$-torsion points. In the second phase we search for concrete instances of simple quadratic $5$-torsion.

\begin{rmk}
    By the Ner\'on-Ogg-Shafarevich criterion, any torsion growth field has ramification only at primes of bad reduction. This immediately gives the following lemma.
\end{rmk}

\begin{lemma}
    \label{lem: limited_dees}
    Let $C$ be a genus 2 curve with Jacobian $J = \Jac(C)$ and conductor $N$. Let $L = \Q(\sqrt{d})$. Let $P \in J(L)$ be a simple quadratic torsion point. Then, we have $\textrm{rad}(\textrm{Disc}(L)) \,|\, N$.
\end{lemma}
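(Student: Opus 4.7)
The plan is to invoke directly the remark immediately preceding the lemma, which asserts (via Néron–Ogg–Shafarevich) that any torsion growth field is unramified outside the primes of bad reduction of $J$. The whole content of the lemma is thus to (i) verify that $L$ is a subfield of such a torsion growth field, and (ii) translate between the ramification data of $L$ and the divisibility of its discriminant's radical by the conductor $N$.

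First I would dispose of the degenerate case $L = \Q$: then $\textrm{Disc}(L) = \pm 1$ and $\textrm{rad}(\textrm{Disc}(L)) = 1$ divides $N$ automatically. So assume $L = \Q(\sqrt{d})$ is a genuine quadratic extension. Since $P$ is a simple quadratic 5-torsion point, the Mumford coordinates of $P$ lie in $L$, and at least one of $\gamma, \delta$ equals $a\sqrt{d}$ or $b\sqrt{d}$ with $a$ or $b$ nonzero rational, so $\Q(P) = L$. In particular $L \subseteq \Q(J[5])$.

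Next I would apply the preceding remark: the 5-torsion field $\Q(J[5])$, being a torsion growth field, is unramified at every prime not dividing $N$. Ramification in an intermediate extension is inherited, so $L/\Q$ is also unramified at every prime $p \nmid N$. For a quadratic field $L$, the primes that ramify in $L/\Q$ are exactly those dividing $\textrm{Disc}(L)$, i.e.\ those in $\textrm{rad}(\textrm{Disc}(L))$. Hence every prime in $\textrm{rad}(\textrm{Disc}(L))$ divides $N$, which gives $\textrm{rad}(\textrm{Disc}(L)) \mid N$ as required.

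There is no substantial obstacle here beyond citing the Néron–Ogg–Shafarevich criterion cleanly; the only small subtlety is that the usual statement of Néron–Ogg–Shafarevich concerns $\ell$-torsion for $\ell$ coprime to the residue characteristic, so one must note that the authors' preceding remark already incorporates the $p = 5$ case (which is the only one that could a priori cause trouble for $L = \Q(\sqrt{5})$). Granting that, the proof amounts to the containment $L \subseteq \Q(J[5])$ together with the standard characterization of ramified primes in quadratic fields via the discriminant.
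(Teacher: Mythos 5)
Your proposal is correct and takes essentially the same route the paper intends: the paper itself gives no standalone proof, only the preceding remark invoking N\'eron--Ogg--Shafarevich to say the torsion field (and hence any subfield) is unramified away from bad reduction, and your argument spells out exactly that chain together with the standard description of ramified primes of a quadratic field via its discriminant. The one subtlety you flag at the end is in fact a real one: the usual N\'eron--Ogg--Shafarevich criterion controls ramification in $T_\ell(J)$ only at primes $p \neq \ell$, and since $\Q(\zeta_5) \subseteq \Q(J[5])$ the $5$-torsion field is always ramified at $5$; the paper's remark does not actually address the case $p = 5$, so strictly speaking the case $d = \pm 5$ with $5 \nmid N$ is not covered by the argument in either the paper or your write-up -- you are right to hedge it rather than claim it follows.
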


For any $N$, there are only finitely many quadratic fields with $N$-smooth discriminant. Thus, Lemma \ref{lem: limited_dees} allows us to compute a finite list of fields which could possibly contain simple quadratic 5-torsion. This list can be narrowed further using the following theorem.

\begin{thm} \label{thm: quadratic-elimination}
    Let $P \in J$ be a simple quadratic 5-torsion point defined over $K = \Q(\sqrt{d})$. Let $p \neq 5$ be a good rational prime which splits in $K$. Then, $J/\Fp$ admits a rational 5-torsion point.
\end{thm}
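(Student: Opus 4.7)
The plan is to pull the torsion point $P$ down to $J/\Fp$ by reduction modulo a prime of $\O_K$ above $p$, and then use the splitting hypothesis to show the reduced point is $\Fp$-rational.

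First I would choose a prime $\mathfrak{p}$ of $\O_K$ above $p$. Since $p$ splits in $K=\Q(\sqrt{d})$, the residue field $\O_K/\mathfrak{p}$ is $\Fp$. Because $p$ is a good prime, the reduction map $J(K) \to (J/\Fp)(\Fp)$ extends to a map on $\overline{\Q}$-points, and a standard fact (e.g. Silverman, \emph{Arithmetic of Elliptic Curves} VII.3.1, whose proof adapts to abelian varieties) says that the restriction to prime-to-$p$ torsion is injective. Since $p\neq 5$, reducing the nontrivial $5$-torsion point $P$ produces a nontrivial $5$-torsion point $\bar{P} \in (J/\Fp)(\overline{\Fp})$.

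The crux is showing that $\bar{P}$ is actually $\Fp$-rational, i.e.~fixed by the geometric Frobenius of $\Fp$. Lift $\mathfrak{p}$ to a prime $\mathfrak{P}$ of $\overline{\Q}$, and let $D_{\mathfrak{P}} \subseteq \Gal(\overline{\Q}/\Q)$ be its decomposition group. Because $p$ is of good reduction, the inertia subgroup $I_{\mathfrak{P}}$ acts trivially on $J[5]$, so $D_{\mathfrak{P}}/I_{\mathfrak{P}} \cong \Gal(\overline{\Fp}/\Fp)$ acts on $J[5](\overline{\Q})$, and the reduction isomorphism $J[5](\overline{\Q}) \xrightarrow{\sim} (J/\Fp)[5](\overline{\Fp})$ intertwines this action with the Frobenius action. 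Now $P$ is defined over $K$, so the action of $\Gal(\overline{\Q}/\Q)$ on $P$ factors through $\Gal(K/\Q)$. Under this quotient, $D_{\mathfrak{P}}$ maps to the decomposition group of $\mathfrak{p}$ in $\Gal(K/\Q)$, which is trivial because $p$ splits completely in $K$. Hence $D_{\mathfrak{P}}$ fixes $P$, so $\Frob_p$ fixes $\bar{P}$, giving $\bar{P} \in (J/\Fp)(\Fp)[5] \setminus \{0\}$, as desired.

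The main conceptual step is the compatibility between reduction of torsion points and the Galois/Frobenius actions; everything else is a direct translation of ``split prime $\iff$ trivial decomposition group.'' I do not expect any technical obstruction beyond carefully citing (or briefly verifying) the reduction injectivity on prime-to-$p$ torsion and the Galois-equivariance of reduction.
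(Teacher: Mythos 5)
Your proof is correct and follows essentially the same route as the paper: reduce $P$ modulo a prime $\p$ of $\O_K$ above $p$ and invoke injectivity of reduction on prime-to-$p$ torsion. The only cosmetic difference is that you deduce $\Fp$-rationality of the reduced point via a decomposition-group argument, whereas the paper observes directly that $\O_K/\p \cong \Fp$ (because $p$ splits), so the reduction map $J(K) \to J(\FF_\p) \cong J(\Fp)$ already lands in $J(\Fp)$ with no separate Frobenius check needed.
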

\begin{proof}
    Let $K$ be a quadratic field for which $J(K)[5] \neq 0$. Let $p \neq 5$ be a prime of good reduction with $p\O_K = \p_1\p_2$ for $\p_1, \p_2 \in \Spec^*(\O_K)$. There is a reduction-mod-$\p$ map $J(K) \rightarrow J(\FF_\p) \cong J(\Fp)$. Since the multiplication-by-5 map on $J/\FF_\p$ is \'etale, Hensel's lemma guarantees that reduction modulo $\p$ is injective on 5-torsion.
\end{proof}

The contrapositive of Theorem \ref{thm: quadratic-elimination} allows us to sample primes and use them to preclude certain quadratic fields from containing simple quadratic 5-torsion. The first phase of the simple quadratic torsion determination is summarized in Algorithm \ref{alg: quad_phase_one}. 

\begin{staticalgorithm}
\staticalgorithmcaption{An algorithm to compute where quadratic 5-torsion is plausible.}{A hyperelliptic curve $C$ with conductor $N$.}{A list of integers $d$, containing all $d$ such that $C$ may have 5-torsion over $\Q(\sqrt{d})$.} \label{alg: quad_phase_one}

\begin{enumerate}
    \item Initialize \texttt{possibilities} to the set of integers $d$ such that $\textrm{Disc}(\Q(\sqrt{d})) \,|\, N^k$ for some $k > 0$.
    \item \textbf{For} good prime $p \in [10,000,\; 20,000]$ or \textbf{until} \texttt{possibilities} is empty \textbf{do}
    \begin{enumerate}
        \item Compute $\Jac(C)(\Fp)$, the group of rational points of the Jacobian over $\Fp$.
        \item \textbf{If} 5 does not divide $\#\Jac(C)(\Fp)$ \textbf{then}
            \begin{enumerate}
                \item Remove from \texttt{possibilities} each $d$ for which $p$ splits in $\Q(\sqrt{d})$. 
            \end{enumerate}
    \end{enumerate}
    \item \textbf{Return} \texttt{possibilities}.
\end{enumerate}
\end{staticalgorithm}
\

If Algorithm \ref{alg: quad_phase_one} produces an empty list, we conclude there is no simple quadratic 5-torsion. If Algorithm \ref{alg: quad_phase_one} produces a non-empty list, then the second phase attempts to find simple quadratic 5-torsion in the remaining fields. In our example curve, running Algorithm \ref{alg: quad_phase_one} returns that there is one quadratic extension where there might be additional $5$-torsion, $\Q(\sqrt{5})$. Thus we will check for additional $5$-torsion over $\Q(\sqrt{5})$.

Jacobians of genus 2 curves can be thought of in two very distinct ways. One way is as a quotient $\C^2/\Lambda$ of $\C^2$ by a lattice, with points on the Jacobian represented as points in a fundemental domain. The other way is that which we have used so far in this paper: a blowdown of $\textrm{Sym}^2(C)$, with points represented via Mumford coordinates. In Magma, these representations are separate objects, referred to as analytic and algebraic Jacobians, respectively. These separate objects are related by the Abel-Jacobi map, which maps from the algebraic Jacobian to the analytic.

To find concrete instances of simple quadratic torsion in phase 2, we start by computing the $5$-torsion points on the analytic Jacobian, realized via high-precision floating point values. These $5$-torsion points are then pulled back to the algebraic Jacobian, at which point we seek to determine if the Mumford coordinates match the form needed for simple quadratic torsion.

Let $P \in \Jac(C)(\C)$ be a 5-torsion point defined over $\C$ with Mumford coordinates $P : (x^2 + \alpha x + \beta, \gamma x + \delta)$. If $P$ is simple quadratic defined over $K = \Q(\sqrt{d})$, then $\alpha$, $\beta$, $\gamma/\sqrt{d}$, and $\delta/\sqrt{d}$ are all rational. We determine if the pair of floating point numbers representing a complex number likely represents a rational number using its continued fraction expansion, which is available via the Magma function \texttt{ContinuedFraction}. 
If the complex coordinates of a point $P$ can be identified with algebraic values of bounded height, a point $\Tilde{P}$ with those coordinates is constructed in $\Jac(C)(K)$ and it is checked whether $5\Tilde{P}$ is the identity.\footnote{This check is performed to guard against cases where the coordinates of a 5-torsion point are coincidentally extremely close to a rational number.} The entire procedure is summarized in Algorithm \ref{alg: finding_torsion}. \\

\begin{staticalgorithm}
\staticalgorithmcaption{An algorithm to compute whether the Jacobian of a hyperelliptic curve has simple quadratic 5-torsion.}{A hyperelliptic curve $C$}{A ``boolean" - \texttt{true} if the curve $C$ has a simple quadratic 5-torsion point, \texttt{false} if the algorithm can prove a lack of simple quadratic 5-torsion, and \texttt{maybe} otherwise.} \label{alg: finding_torsion}

\begin{enumerate}
    \item Initialize \texttt{possibilities} to be the output of Algorithm \ref{alg: quad_phase_one}, with input $C$. This is the list of quadratic extensions over which $J=\Jac(C)$ possibly has 5-torsion.

    \item \textbf{If} \texttt{possibilities} is an empty list \textbf{then} \textbf{return} \texttt{false}, since there are no candidate quadratic extensions.

    \item Compute a bound $h$ on the na\"ive height of torsion in $\Q(\sqrt{d})$ (see Section \ref{sec: height_bounds}). 

    \item Construct the analytic Jacobian, which is represented as $\C^2 / \Lambda$ for a lattice $\Lambda$. Compute a $\Z$-basis $v_1,v_2,v_3,v_4$ for $\Lambda$. The $5$-torsion points of the analytic Jacobian are linear combinations of $\frac{1}{5}v_1$, $\frac{1}{5}v_2$, $\frac{1}{5}v_3$, $\frac{1}{5}v_4$. Store one representative for each 1-D subspace of $J(\C)[5]$ in the list \texttt{all\_torsion}. 

    \item Map each element of \texttt{all\_torsion} to the algebraic Jacobian of $C$. Call this new list \\ \texttt{algebraic\_torsion\_points}. 
    
    \item Initialize the empty list \texttt{good\_torsion}.
    \item \textbf{For} each \texttt{point} in \texttt{algebraic\_torsion\_points} \textbf{do}
    \begin{enumerate}
        \item Considering \texttt{point} as $P = (u, v) = (x^2+\alpha x + \beta, \gamma x + \delta)$, compute whether $\alpha$ and $\beta$ have rational approximations with height bounded by $h$ and error bounded by $\varepsilon =\frac{1}{2e^{2h}}$. Our choice of $\varepsilon$ ensures that the approximations to $\alpha$ and $\beta$ are unique, provided they exist.
        \item \textbf{If} such approximations $\alpha \approx a_n/a_d$ and $\beta \approx b_n/b_d$ are found \textbf{then}
        \begin{enumerate}
            \item Compute a list \texttt{upoints} of the four points with mumford coordinates $(x^2+\frac{a_n}{a_d} x + \frac{b_n}{b_d}, \gamma_i x + \delta_i)$.
            \item \textbf{For} $P_i$ in \texttt{upoints} \textbf{do}
            
            \quad \quad \textbf{If} $\gamma_i$ and $\delta_i$ are of the form $m\sqrt{d}$ and $n \sqrt{d}$ respectively, for $m, n \in \Q$ \textbf{then} 

            \quad\quad\quad\quad append $P_i$ to \texttt{good\_torsion}.
        \end{enumerate}
        
    \end{enumerate}
    \item \textbf{For} each \texttt{point} in \texttt{good\_torsion} \textbf{do} 
    
    \quad \quad \textbf{If} \texttt{5*point=Id} \textbf{then} \textbf{return} \texttt{true}. 
        
    \item \textbf{return} \texttt{maybe}
\end{enumerate}
\end{staticalgorithm}

\begin{rmk} \label{rmk: false-negative}

     A word of caution: step 5 of Algorithm \ref{alg: finding_torsion} relies on accurate inversion of the Abel-Jacobi map. However, the Magma function \texttt{FromAnalyticJacobian} is known to not always exhibit this kind of numerical stability; see Example 3.4.9 in \cite{costa2017rigorous}. In other words, phase 2 produces no false positives but may produce false negatives. This is why the algorithm outputs \texttt{maybe} at the end, rather than \texttt{false}. Since we cannot bound the error under \texttt{FromAnalyticJacobian}, we heuristically use $\varepsilon = \frac{1}{2e^{2h}}$.

     In order to produce a more robust algorithm, one would need a numerically stable inversion of Abel-Jacobi where one could bound the numerical error in inverting a given point. If we had such an algorithm, then Algorithm \ref{alg: finding_torsion} could be edited to provably compute whether a curve admits simple quadratic 5-torsion.
\end{rmk}

\begin{rmk}
    Such an algorithm would also allow one to easily adapt Algorithm \ref{alg: finding_torsion} to rigorously compute torsion subgroups over imaginary quadratic fields.
\end{rmk}

Algorithm \ref{alg: finding_torsion}, using a hypothetical \texttt{StableAbelJacobiInversion} instead of  \texttt{FromAnalyticJacobian}, is guaranteed to be correct, as it exhaustively checks all potential $5$-torsion points. The properties needed for a guarantee of correctness are a bound on the error of the inversion and on the height of the rational approximations. Promisingly, there are indeed algorithms that stably invert Abel Jacobi with computably bounded error \cite[pp.~5--11]{costa2017rigorous}.

\begin{lemma}\label{lem: quad-tors}
    If Algorithm \ref{alg: finding_torsion} returns \texttt{true} then $\Jac(C)$ contains a simple quadratic torsion point.
\end{lemma}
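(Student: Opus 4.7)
The plan is to argue that every positive return of Algorithm \ref{alg: finding_torsion} is certified by an exact algebraic computation, so numerical artifacts in the inversion of Abel--Jacobi cannot produce a false \texttt{true}. In more detail, I would trace precisely where a \texttt{true} is emitted: Step 8 returns \texttt{true} only when some point $P_i$ in the list \texttt{good\_torsion} satisfies $5 P_i = \mathrm{Id}$ on the \emph{algebraic} Jacobian $\Jac(C)$. So it suffices to show that any such $P_i$ (i) is a genuine $5$-torsion point of $\Jac(C)$ defined over a quadratic field, and (ii) is simple quadratic in the sense of Definition \ref{def: simple_quadratic}.

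For (i), I would observe that the point $P_i$ is constructed in Step 7(b)(i)--(ii) as a concrete element of $\Jac(C)$ with Mumford coordinates having explicit coefficients in some $\Q(\sqrt{d})$. The group law on the algebraic Jacobian is computed symbolically (e.g.\ by Cantor's algorithm on Mumford representatives), so the equality $5 P_i = \mathrm{Id}$ returned by Magma is an exact statement about $\Jac(C)(\Q(\sqrt{d}))$, independent of the floating-point data that suggested $P_i$ in the first place. Thus $P_i \in \Jac(C)[5]$ and $P_i$ is defined over $\Q(\sqrt{d})$.

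For (ii), I would inspect the construction in Step 7. By the inner loop's filter, $P_i$ has Mumford coordinates $(x^2 + \alpha x + \beta, \gamma x + \delta)$ where $\alpha = a_n/a_d$ and $\beta = b_n/b_d$ are rational, and where $\gamma = m\sqrt{d}$, $\delta = n\sqrt{d}$ with $m, n \in \Q$ and $d \in \Z$ square-free. This is exactly the shape required by Definition \ref{def: simple_quadratic}, provided $P_i$ has maximum degree Mumford coordinates (so that it corresponds to a divisor $[P_1] + [P_2] - [\infty_1] - [\infty_2]$ with both $P_1, P_2$ affine); this holds by construction since $u = x^2 + \alpha x + \beta$ is of degree $2$. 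Combining (i) and (ii), $P_i$ is a simple quadratic $5$-torsion point of $\Jac(C)$, which is exactly the conclusion of the lemma.

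There is no real obstacle here, since the lemma only asserts soundness of a \texttt{true} output, not completeness. The delicate numerical step (the use of \texttt{FromAnalyticJacobian} and the continued-fraction recovery of $\alpha, \beta$) only affects whether we ever reach the $5 P_i = \mathrm{Id}$ test with a promising candidate; any candidate that does pass it is certified by the exact group law on $\Jac(C)$, as noted in Remark \ref{rmk: false-negative}.
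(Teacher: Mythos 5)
Your argument is correct and is essentially the same as the paper's: the paper's proof simply says ``This follows from step 8 of Algorithm \ref{alg: finding_torsion},'' and you have fleshed out exactly why step 8 suffices (the exact symbolic check $5P_i = \mathrm{Id}$ certifies genuine $5$-torsion over $\Q(\sqrt{d})$, and the filters in step 7 guarantee the Mumford coordinates have the simple quadratic shape). No gaps; your version is a more explicit unpacking of the same observation.
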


\begin{proof} 
    This follows from step 8 of Algorithm \ref{alg: finding_torsion}.
\end{proof}

In our running example, we find that there is additional $5$-torsion over $\Q(\sqrt{5})$. Specifically, Algorithm \ref{alg: finding_torsion} yields that the point $(x^2-2x+1,\sqrt{5}x-2\sqrt{5})$ is a $5$-torsion point over $\Q(\sqrt{5})$.
This tells us that we can narrow down the list of possible images to subgroups compatible with simple quadratic torsion. Checking each of the current candidates, $\{5. 624.2, 5.624.4\}$, we find that only $5.624.2$ is compatible with simple quadratic torsion, so we conclude that the image is $5.624.2$.

\subsection{Proof of Correctness}

Recall the Main Theorem.
\begin{introthmm}[Main Theorem]
    Let $C/\Q$ be a genus 2 hyperelliptic curve with Jacobian $J$. There is an effective constant $N$, depending only on $C$, such that when sampling all primes in the range $[10000, N]$, Algorithm \ref{alg: imager} produces a list of at most eight equal-order subgroups that contains the mod-$5$ image of Galois.
\end{introthmm}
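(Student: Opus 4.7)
The plan is to show that for an effectively chosen $N$, each step of Algorithm \ref{alg: imager} either preserves or correctly shrinks the list \texttt{possibilities}, and that after the local-information step the list already consists of at most eight equal-order subgroups containing $\im \rho_{J,5}$. The subsequent global filters then only shrink this list further without ever removing the true image.

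First I would obtain $N$ by applying Lemma \ref{lem: distribution-filter} to $C$, combined with an effective version of the Chebotarev density theorem such as \cite{effective}. The existence proof of Lemma \ref{lem: distribution-filter} reduces to positivity of the gap $b = \min_{G}|\mathcal{F}_G - \mathcal{F}_{\im \rho_{J,5}}|$ (minimum over candidate subgroups with distinct local distribution), and effective Chebotarev converts the requirement $|\mathcal{E}(J,n) - \mathcal{F}_{\im \rho_{J,5}}| < b/2$ into an explicit bound $N_0$ depending only on the conductor of $C$. Set $N := N_0$.

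Next I would analyze the output of Step 3 when its loop exits via $U \geq N$. Algorithm \ref{alg: vector-distances} returns precisely the subgroups of \texttt{possibilities} whose local distribution is closest in $\R^{99}$ to $\mathcal{E}(J, N)$. By the choice of $N$, this is the set $\{G \in \texttt{possibilities} : \mathcal{F}_G = \mathcal{F}_{\im \rho_{J,5}}\}$. By Lemma \ref{lem: local correctness} this set contains $\im \rho_{J,5}$; by Lemma \ref{lem: bucket-size} it has cardinality at most eight; and by Remark \ref{rmk: order} its members share one order. It remains to verify that Steps 4--7 only prune \texttt{possibilities}. Step 5 is correct by Lemma \ref{lem: rational-filter} (the rational-torsion computation being exact via Stoll's algorithm). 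Step 7 is correct by Corollary \ref{cor: correctness-of-simple-quad}, provided we substitute the numerically stable Abel--Jacobi inversion of \cite{costa2017rigorous} for \texttt{FromAnalyticJacobian}; as discussed in Remark \ref{rmk: false-negative}, this upgrade turns Algorithm \ref{alg: finding_torsion} into an exhaustive search bounded by the effective height bound of Theorem \ref{thm: height-bound} over the finite list of candidate quadratic fields from Lemma \ref{lem: limited_dees}. Hence neither filter can remove $\im \rho_{J,5}$, and the equal-order property of \texttt{possibilities} is preserved.

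The main obstacle is effectivity of $N$. Lemma \ref{lem: distribution-filter} is stated existentially, so the proof must be refined to extract, first, a positive lower bound on $b$ from a finite (but potentially large) computation over the subgroup lattice of $\GSp_4(\FF_5)$, and second, an explicit Chebotarev bound for the $5$-torsion field $\Q(J[5])$ whose ramification is controlled by the conductor of $C$. Together these two inputs pin down $N$ as a function of $C$ alone, but the resulting bound is precisely the computationally intractable one remarked upon immediately after the theorem statement. A secondary, more technical obstacle is justifying the swap of \texttt{FromAnalyticJacobian} for a provably stable inversion; without it, Step 7 may return \texttt{maybe} and the quadratic-torsion filter is simply skipped, in which case the conclusion still follows from the local step alone.
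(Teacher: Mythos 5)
Your proposal is essentially the same decomposition the paper uses: show the local step yields at most eight equal-order subgroups containing $\im\rho_{J,5}$ (via Lemmas \ref{lem: local correctness}, \ref{lem: bucket-size}, and Remark \ref{rmk: order}), then show the global filters never remove the true image. One place where your reasoning is slightly off: you treat the \texttt{false} branch of Algorithm \ref{alg: finding_torsion} as requiring the substitution of a rigorous Abel--Jacobi inversion, and then fall back to ``maybe is returned and the filter is skipped.'' In fact \texttt{false} is \emph{only} returned when Algorithm \ref{alg: quad_phase_one} produces an empty list, which is a certified output (via the contrapositive of Theorem \ref{thm: quadratic-elimination} and Lemma \ref{lem: limited_dees}), entirely independent of \texttt{FromAnalyticJacobian}; this is exactly how the paper's proof handles case (3). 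So the substitution is unnecessary for correctness, and your fallback understates the guarantees — all three outputs \texttt{true}, \texttt{false}, \texttt{maybe} are already safe. Your observation about effectivity is a fair complaint: Lemma \ref{lem: distribution-filter} is existential and the paper's own proof leans on ``for adequately large $N$'' without explicitly invoking effective Chebotarev, so your sketch of how to extract an explicit $N$ from a positive gap $b$ plus \cite{effective} is a useful supplement rather than a divergence.
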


\begin{proof}
    Algorithm \ref{alg: imager} proceeds by applying successive filters to a list of possible images. It suffices to verify that each filter retains the image $\im \rho_{J, 5}$, and that one filter returns only up to eight subgroups, all of the same order. 

    By Lemma \ref{lem: local correctness}, for adequately large $N$ the output of Algorithm \ref{alg: vector-distances} contains the image of Galois. Furthermore, by Remark \ref{rmk: order}, all subgroups in the output are equal-order, and by Lemma \ref{lem: bucket-size}, there are at most eight. The correctness of the rational torsion filter is proven in Lemma \ref{lem: rational-filter}. 
    Finally, in step 6 we filter based on the output of Algorithm \ref{alg: finding_torsion}. Consider three cases.
    \begin{enumerate}
        \item If Algorithm \ref{alg: finding_torsion} returns \texttt{maybe} then we simply keep \texttt{possibilities} as is. Since \texttt{possibilities} contained the true image before this step, it continues to contain the true image.
        \item If Algorithm \ref{alg: finding_torsion} returns \texttt{true}, then, by Lemma \ref{lem: quad-tors}, $J$ admits simple quadratic torsion. By Corollary \ref{cor: correctness-of-simple-quad}, this corresponds to when the the image of the Galois representation has a $(\pm 1)$-group eigenspace, which is precisely the subset of \texttt{possibilities} which we return. Thus, in this case, the new instance of \texttt{possibilities} coming out of step 6 contains the true image.
        \item Finally, if Algorithm \ref{alg: finding_torsion} returns \texttt{false}, then, by Theorem \ref{thm: quadratic-elimination}, $J$ lacks simple quadratic 5-torsion. Like in the previous case, we may conclude from Corollary \ref{cor: correctness-of-simple-quad} that Algorithm \ref{alg: finding_torsion} does not filter out the image of Galois.
    \end{enumerate}
    
    Since every step returns a list which contains the true image, the whole algorithm must return a list which contains the true image.
\end{proof}

\section{Results} \label{sec: results}

We ran Algorithm \ref{alg: imager} on all genus 2 curves $C$ in LMFDB \cite{lmfdb} for which $\rho_{\Jac(C), 5}$ is not surjective, with prime bound $N=100,000$ and likelihood difference $\nu = 30$ (see Definition \ref{def: likelihood ratio}).

This means we sampled $\Frob_p$ for all good primes $p \in  [10000, 20000]$, a set of size up to 1033, and then, if needed to obtain our desired probability of error, sampled additional primes up to 100,000. In total, we ran Algorithm \ref{alg: imager} on 3990 curves. On a server equipped with an Intel Core i9-12900K processor (16 cores, 24 threads) with a maximum clock speed of 5.2 GHz, the computation took $110$ minutes for the typical curves and $345$ minutes for the atypical curves. As previously mentioned, choosing a value of $N$ large enough to attain the bounds required for an effective version of the Chebotarev Density Theorem would be prohibitively computationally expensive, hence our default choice of only sampling primes $p \in [10000,20000]$. This choice is consistent with prior work, such as \cite{Vogt-surjectivity}. The code used for this computation can be found in the \texttt{LMFDB\_imaging.magma} file in the GitHub repo. We now justify our choice of $N$ by estimating the likelihood of error. 

\begin{lemma}
\label{lem: probability-error-bound}
    Let $J$ be the Jacobian of a genus 2 curve listed in the LMFDB. Model the group elements $\{\Frob_p\}$ as a set of independent random variables drawing uniformly from the image of Galois. Under this model, and assuming a uniform prior probability distribution on possible subgroups, the probability that our algorithm's output on $J$ fails to return a set containing $\im \rho_{J, 5}$ is bounded above by $1.051\cdot 10^{-10}$, or 1 in 9.5 billion.
\end{lemma}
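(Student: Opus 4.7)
The plan is to isolate the single source of potential error in Algorithm \ref{alg: imager} and bound it with a Bayesian computation. Every filter except the distance-minimization step in Algorithm \ref{alg: vector-distances} preserves the true image unconditionally: the elimination of subgroups missing an observed (characteristic polynomial, $1$-eigenspace dimension) pair in Step 3(a) can only discard groups that do not contain a Frobenius image; the rational-torsion filter is justified by Lemma \ref{lem: rational-filter}; the simple-quadratic filter is justified by Corollary \ref{cor: correctness-of-simple-quad}, Lemma \ref{lem: quad-tors}, and Theorem \ref{thm: quadratic-elimination}; and a \texttt{maybe} output from Algorithm \ref{alg: finding_torsion} causes no filtering at all. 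So an erroneous output must come from Algorithm \ref{alg: vector-distances} choosing the wrong (set of equal-likelihood) subgroup(s).

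Next I would translate the stopping condition $\eta > \nu = 30$ into a statement about likelihoods. Under the modeling assumption, the observed sample $D$ consists of i.i.d.\ draws from $\mathcal{F}_{\im \rho_{J,5}}$, so for any candidate $G$ the genuine likelihood $P(D \mid G)$ factors as a multinomial coefficient (which does not depend on $G$) times $\prod_v \mathcal{F}_G(v)^{n_v}$, where $n_v$ is the observed count of the invariant $v$. The quantity $\mathcal{P}(\mathcal{D}_i, J, N)$ of Definition \ref{def: likelihood ratio} is the same product, with the multinomial coefficient included; since this coefficient cancels in any ratio, $\eta = \mathcal{L}(J, N) = \log\bigl(P(D \mid G^*)/\max_{G \notin S} P(D \mid G)\bigr)$, where $S$ denotes the set of subgroups output by the algorithm and $G^* \in S$ is a maximum-likelihood element. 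Hence the stopping rule gives $P(D \mid G) \leq e^{-\nu} P(D \mid G^*)$ for every $G \notin S$, and every element of $S$ shares the same likelihood $P(D \mid G^*)$ by construction.

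Finally I would invoke Bayes' theorem with the uniform prior on the $M = 1125$ candidate subgroups (those with surjective similitude character and a complex-conjugation element of order $2$ and similitude $-1$). The posterior probability of error is
\[
P(\im \rho_{J,5} \notin S \mid D) \;=\; \frac{\sum_{G \notin S} P(D \mid G)}{\sum_H P(D \mid H)} \;\leq\; \frac{(M - |S|) \, e^{-\nu} P(D \mid G^*)}{|S| \cdot P(D \mid G^*)} \;=\; \frac{M - |S|}{|S|} \, e^{-\nu}.
\]
This is maximized at $|S| = 1$, yielding the stated bound $1124 \cdot e^{-30} \approx 1.051 \times 10^{-10}$.

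The only delicate point is the verification that the multinomial coefficient in Definition \ref{def: likelihood ratio} cancels when we form the likelihood ratio, so that the algorithm's log-likelihood difference $\eta$ really coincides with $\log P(D \mid G^*) - \log\max_{G \notin S} P(D \mid G)$; everything else is a short formal manipulation, and no other step of Algorithm \ref{alg: imager} introduces randomness that could discard the true image.
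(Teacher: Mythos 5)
Your proposal follows the same essential approach as the paper's: isolate the distance-minimizing step (Algorithm \ref{alg: vector-distances}) as the unique filter in Algorithm \ref{alg: imager} that can discard the true image, then bound the posterior probability of error via Bayes' rule using the uniform prior on the 1125 admissible subgroups and the $\eta > \nu = 30$ stopping condition. Your Bayesian manipulation is in fact a bit tidier than the paper's: you sum directly over the 1125 subgroups rather than over equivalence classes of local distributions, so the $\#G(\mathcal{D}_i)$ bookkeeping never appears, and your numerator and denominator are exactly the Bayes'-rule terms. (The paper's second display line carries a spurious extra factor of $(1-\PP(\im\rho_{J,5}\in G(\mathcal{D}_1)))$ that is not part of Bayes' rule; this happens to tighten rather than loosen the estimate, and both routes land at $\approx 1.05\times 10^{-10}$, so it is immaterial but worth knowing that your version is the clean one.) Your note about the multinomial coefficient cancelling in the likelihood ratio is correct and is the right technical observation to make.

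The one genuine gap is that you never justify the premise on which the entire computation rests, namely that Algorithm \ref{alg: imager} did terminate Step 3 because the log-likelihood condition $\eta > 30$ was met. The loop in Step 3 has a second exit: $U \geq N$. If a curve hits the prime-bound cutoff before reaching the confidence threshold, you have no inequality $\mathcal{P}(\mathcal{D}_i,J,N) \leq e^{-30}\mathcal{P}(\mathcal{D}_1,J,N)$, and the Bayes bound does not follow. Because the lemma is a claim about every genus 2 curve in the LMFDB, proving it requires the empirical observation --- which the paper states explicitly and flags as crucial --- that when the algorithm was actually run on all 3990 curves, each one terminated its sampling by satisfying the log-likelihood condition rather than by exhausting the primes up to $100{,}000$. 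Without recording that fact, your translation of ``the stopping condition'' into a likelihood-ratio inequality is unsupported for some hypothetical $J$ in the lemma's scope.
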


\begin{proof}
    Algorithm \ref{alg: imager} stops sampling Frobenius elements for a given curve when either (i) it achieves a log-likelihood difference of at least 30 or (ii) it samples all primes between 10,000 and 100,000. Crucially for proof of this lemma, when we ran this algorithm, every single curve in fact terminated its sampling because it satisfied the log-likelihood condition. 
    
    Let $J$ be the Jacobian a genus 2 curve in the LMFDB with non-surjective mod-5 Galois representation. Let $N$ be the upper bound on the primes sampled when running Algorithm \ref{alg: imager} on $J$. Let $S = \{\mathcal{D}_i\}$ denote the set of local distributions hailing from possible images, with $\mathcal{D}_1$ denoting the local distribution corresponding to the output of Algorithm \ref{alg: imager} on $J$. Let $G(\mathcal{D}_i)$ denote the set of possible images with local distribution $\mathcal{D}_i$. Using the notation of the statement of Definition \ref{def: likelihood ratio}, we have by the previous paragraph that $\mathcal{P}(\mathcal{D}_1, J, N) \geq e^{30}\mathcal{P}(\mathcal{D}_i, J, N)$ for all $i\neq 1$.

    Under the uniform random model of $\Frob_p$, the quantity $\mathcal{P}(\mathcal{D}_i, J, N)$ is the probability that a Jacobian $J$ would yield a given local distribution, conditional on $\im \rho_{J, 5} \in G(\mathcal{D}_i)$. To bound the probability that $\im \rho_{J, 5} \not\in G(\mathcal{D}_1)$, conditional on the sampled local distribution $\mathcal{E}(J, N)$, apply Bayes' rule.
    
    {\allowdisplaybreaks
    \begin{align*}
        &\PP(\im \rho_{J, 5} \not\in G(\mathcal{D}_1) \>|\> \text{empirical data}) = \frac{\PP(\text{empirical data}\>|\> \im \rho_{J, 5} \not\in G(\mathcal{D}_1))\cdot \PP(\im \rho_{J, 5}\not\in G(\mathcal{D}_1))}{\PP(\text{empirical data})} \\
        & = \frac{\left(\sum_{i \neq 1} \PP(\im \rho_{J, 5} \in \mathcal{D}_i)\cdot\PP(\text{empirical data}\>|\> \im \rho_{J, 5} \in G(\mathcal{D}_i))\right)\cdot (1-\PP(\im \rho_{J, 5} \in G(\mathcal{D}_1))}{\sum_i \PP(\im \rho_{J, 5} \in \mathcal{D}_i)\cdot\PP(\text{empirical data}\>|\> \im \rho_{J, 5} \in G(\mathcal{D}_i))} \\
        & = \frac{\left(\sum_{i \neq 1}  \frac{\#G(\mathcal{D}_i)}{1125}\mathcal{P}(\mathcal{D}_i, J, N)\right)(1-\frac{\#G(\mathcal{D}_1)}{1125})}{\frac{\#G(\mathcal{D}_1)}{1125}\mathcal{P}(\mathcal{D}_1, J, N) + \sum_{i \neq 1} \frac{\#G(\mathcal{D}_i)}{1125}\mathcal{P}(\mathcal{D}_i, J, N)} \\
        & \leq \frac{\left(\sum_{i \neq 1}  \frac{\#G(\mathcal{D}_i)}{1125}e^{-30}\mathcal{P}(\mathcal{D}_1, J, N)\right)(1-\frac{\#G(\mathcal{D}_1)}{1125})}{\frac{\#G(\mathcal{D}_1)}{1125}\mathcal{P}(\mathcal{D}_1, J, N) + \sum_{i \neq 1} \frac{\#G(\mathcal{D}_i)}{1125}e^{-30}\mathcal{P}(\mathcal{D}_1, J, N)} \\
        & = \frac{e^{-30}\left(1 - \frac{\#G(\mathcal{D}_1)}{1125}\right)^2}{\frac{\#G(\mathcal{D}_1)}{1125}+ e^{-30}\left(1 - \frac{\#G(\mathcal{D}_1)}{1125}\right)} \leq \frac{e^{-30}\left(1 - \frac{1}{1125}\right)^2}{\frac{1}{1125}+ e^{-30}\left(1 - \frac{1}{1125}\right)} \approx 1.05 \cdot 10^{-10}.
    \end{align*}
    }
\end{proof}

In Algorithm \ref{alg: finding_torsion}, one must choose a floating point precision with which to carry out analytic Jacobian calculations. We make the heuristic choice to use $D= \min(200, \lceil 5h(C) \rceil)$ decimal digits of precision, where $h$ is a bound (depending on the curve $C$) on the na\"ive height of quadratic torsion (see Algorithm \ref{alg: finding_torsion} and Section \ref{sec: height_bounds}).

For purposes of analysis, we split the data by (geometric) endomorphism algebra. Extra endomorphisms restrict the image of Galois, so sorting curves accordingly is a natural choice. Before detailing the results of our computations, we explain the significance of various common image labels.

\subsection{Subgroups and labels} \label{sec:subgroups_and_labels}

Sutherland's \texttt{GSPLattice} program computes the lattice of subgroups of $\GSp_4(\FF_5)$ with surjective similitude character, up to conjugacy. The program labels each subgroup in accordance with the LMFDB labeling system. Figure \ref{fig:subgroup-lattice} gives a sublattice featuring the few most common images associated to the Jacobians having each class of endormorphism ring. We now describe each subgroup appearing in Figure \ref{fig:subgroup-lattice}.

\begin{figure}[!ht]
    \centering
    \begin{tikzpicture}
        \node at (3, 3) (1d1) {5.1.1};
        
        \node at (3, 2) (156d1) {5.156.1};
        \node at (2, 1) (312d1) {5.312.1};
        \node at (4, 1) (312d2) {5.312.2};
        \node at (0, 0) (624d1) {5.624.1};
        \node at (2, 0) (624d2) {5.624.2};
        \node at (4, 0) (624d3) {5.624.3};
        \node at (6, 0) (624d4) {5.624.4};

        \node at (-3, 2) (325d1) {5.325.1};
        \node at (-3, 1) (650d1) {5.650.1};
        \node at (-6, 0) (3250d1) {5.3250.1};
        \node at (-6, -1) (9750d2) {5.9750.2};
        \node at (-6, -2) (19500d7) {5.19500.7};
        \node at (-4, 0) (6500d2) {5.6500.2};

        \node at (9, 2) (300d1) {5.300.1};
        \node at (9, 1) (600d2) {5.600.2};

        \node at (8, 0) (624d8) {5.624.8};

        \node at (-2, 0) (3900d1) {5.3900.1};
        \node at (-2, -1) (15600d3) {5.15600.3};
        \node at (0, -1) (15600d5) {5.15600.5};

        \node at (0, 2) (9750d1) {5.9750.1};
        \node at (-5, 2) (6500d1) {5.6500.1};
        \node at (-5, 1) (13000d5) {5.13000.5};

        \draw (1d1)--(156d1)--(312d1)--(624d1)--(15600d3);
        \draw (312d1)--(624d2);
        \draw (156d1)--(312d2)--(624d3);
        \draw (312d2)--(624d4);

        \draw (1d1)--(325d1)--(650d1)--(3250d1)--(9750d2)--(19500d7);
        \draw (650d1)--(6500d2);

        \draw (156d1)--(3900d1)--(15600d5);
        \draw (650d1)--(3900d1)--(15600d3);
        \draw (624d3)--(15600d5);

        \draw (1d1)--(300d1)--(600d2);

        \draw (1d1)--(624d8);
        \draw (1d1)--(9750d1);
        \draw (1d1)--(6500d1)--(13000d5);
    \end{tikzpicture}
    \caption{A lattice of common images of Galois}
    \label{fig:subgroup-lattice}
\end{figure}
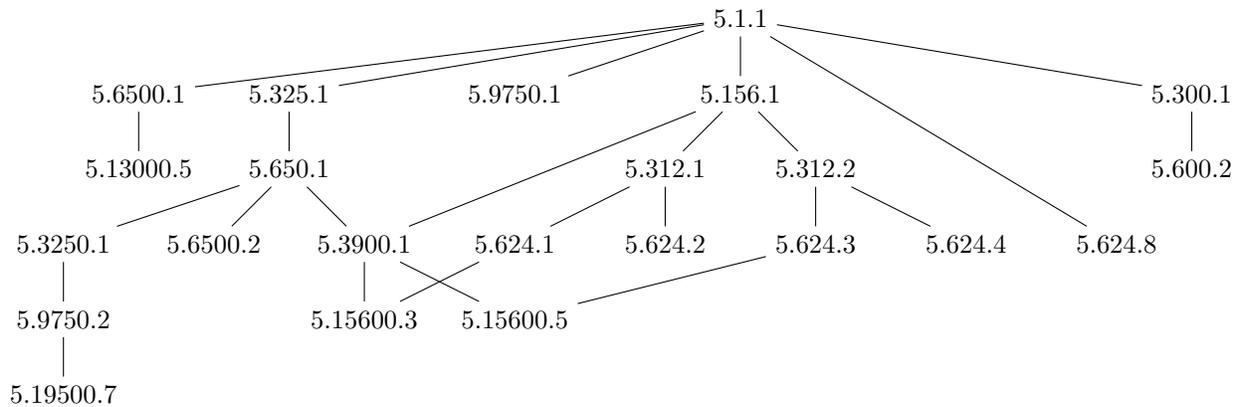

\begin{itemize}
    \item (5.156.1) This is a maximal subgroup of $\GSp_4(\FF_5)$, and specifically is the stabilizer of a 1-dimensional subspace. It so happens that 5.156.1 also stabilizes a 3-dimensional subspace containing the 1-dimensional one. Thus, elements of 5.156.1 are block upper triangular with block sizes of (1, 2, 1). In other words, with the right choice of basis, 
    \[
        5.156.1 = \left\{ \begin{bmatrix}
            *&*&*&*\\
            0&*&*&*\\
            0&*&*&*\\
            0&0&0&*\\
        \end{bmatrix} \right\} \cap \GSp_4(\FF_5).
    \]
    A Jacobian admits a rational 5-isogeny if and only if its image is contained in 5.156.1.
    \item (5.312.1) This is the index 2 subgroup of 5.156.1 obtained by restricting the top-left entry to be $\pm 1$. A Jacobian admits torsion defined over a quadratic field if and only if its image is contained in 5.312.1.
    \item (5.624.1) This is the index 4 subgroup of 5.156.1 obtained by restricting the upper left entry to be 1. A Jacobian admits rational 5-torsion if and only if $\im\rho_{J, 5} \leq 5.624.1$.
    \item (5.624.2) This is the index 2 subgroup of 5.312.1 characterized by $5.312.1 = 5.624.2 \times \{\pm 1\}$. The fact that 5.624.2 lacks the scalar matrix $-1$ implies that containment of an image in 5.624.2 may not be invariant under twisting - twists of Jacobians with image 5.624.2 may have image 5.312.1 \cite[p.~7]{minus1-in-subgroup}.
    \item (5.312.2) This subgroup is best understood by first understanding 5.624.3. This is the index 2 subgroup of 5.156.2 obtained by restricting the bottom-right entry to be $\pm 1$. There is a quotient $q$ onto the bottom right entry. If 5.312.2 is the image attached to $J$, then $q \circ \rho$, by virtue of being a continuous homomorphism $\Gal(\overline{\Q}/ \Q) \rightarrow \{\pm 1\}$, is a quadratic character $\chi_{\Q(\sqrt{d})}$. One may thus consider the restricted Galois representation $\rho_{J, 5}: \Gal(\overline{\Q}/\Q(\sqrt{d})) \rightarrow \GSp_4(\FF_5)$. The image of this restricted Galois representation is contained in 5.624.3. For the same reasons as discussed in the description of 5.624.3, it follows that such a Jacobian lacks quadratic 5-torsion but is isogenous to a variety with quadratic 5-torsion. 5.312.2 is the maximal possible image subject to this property.
    \item (5.624.3) This is the index 4 subgroup of 5.156.1 obtained by restricting the bottom right entry to be 1. Every element of this group has a non-trivial 1-eigenspace. Thus, Jacobians with image 5.624.3 have 5-torsion mod every prime of good reduction, but not over $\Q$. It follows by a theorem of Katz \cite[p.~483]{katz-local-global} that any such Jacobian is isogenous to a variety with rational 5-torsion. Conversely, any Jacobian with that property has image contained in 5.624.3.
    \item (5.624.4) This is the index 2 subgroup of 5.312.2 characterized by $5.312.2 = 5.624.4 \times \{\pm 1\}$. Like 5.624.2, this subgroup may contain the image attached to a Jacobian but not to all twists thereof.
    \item (5.325.1) This subgroup is maximal, and is the normalizer of 5.650.1. With the right choice of basis, 5.325.1 is the set of elements of either of the following forms:
    \[
        5.325.1 = \left\{\begin{bmatrix}
            *&*&0&0\\
            *&*&0&0\\
            0&0&*&*\\
            0&0&*&*\\
        \end{bmatrix}, \begin{bmatrix}
            0&0&*&*\\
            0&0&*&*\\
            *&*&0&0\\
            *&*&0&0\\
        \end{bmatrix}\right\} \cap \GSp_4(\FF_5).
    \]
    Jacobians admitting a quadratically-defined isogeny of degree prime to 5 to a product of non-isogenous elliptic curves have image contained in 5.325.1.
    \item (5.650.1) This is the group of elements preserving each summand is a direct sum decomposition of $J(\overline{\Q})[5]$ into two nondegenerate 2-dimensional subspaces each defined individually over $\FF_5$. If there is a rationally-defined isogeny $J \rightarrow E_1 \times E_2$ of degree relatively prime to 5 from a Jacobian $J$ to a product $E_1 \times E_2$ of non-isogenous elliptic curves, then $\im \rho_{J, 5} \leq 5.650.1$. The converse is notably false --- see Section \ref{sec: typicals}.
    \item (5.3250.1) This is the maximal subgroup of 5.650.1 whose action on one of the preserved 2-dimensional subspaces is the same as that of the exceptional maximal subgroup of $\GL_2(\FF_5)$. 
    \item (5.9750.2) This is the subgroup of 5.650.1 whose action on one of the preserved 2-dimensional subspaces is the same as that of the normalizer of a split Cartan subgroup of $\GL_2(\FF_5)$. 5.9750.2 is not maximal in 5.650.1; it is index 3 in the intermediary subgroup 5.3250.1. This corresponds to the fact that the normalizer of a split Cartan subgroup of $\GL_2(\FF_5)$ is not maximal, but instead contained in an exceptional maximal subgroup.
    \item (5.19500.7) This is an index 2 subgroup of 5.9750.2. Specifically, this is the subgroup of 5.650.1 whose action on one of the preserved 2-dimensional subspaces is the same as that of $G$, the intersection of the normalizer of a split Cartan and the normalizer of a non split Cartan subgroup of $\GL_2(\FF_5)$. Concretely, $G$ is the index 2 subgroup of the normalizer of a split Cartan for which the diagonal elements all have square determinant and the off-diagonal elements all have non-square determinant.
    \item (5.6500.2) This is the subgroup of 5.650.1 whose action on one of the preserved 2-dimensional subspaces is the same as that of the normalizer of a non-split Cartan subgroup of $\GL_2(\FF_5)$. 
    \item (5.3900.1) This subgroup is the intersection of 5.156.1 and 5.650.1. With the right choice of basis, one may express 5.3900.1 as 
    \[
        5.3900.1 = \left\{\begin{bmatrix}
            *&*&0&0\\
            0&*&0&0\\
            0&0&*&*\\
            0&0&*&*\\
        \end{bmatrix} \right\} \cap \GSp_4(\FF_5).
    \]
    \item (5.15600.3) This subgroup is the intersection of 5.624.1 and 5.650.1. It is the index-4 subgroup of 5.3900.1 for which the upper left entry is 1.
    \item (5.15600.5) This subgroup is the intersection of 5.624.3 and 5.650.1. It is the index-4 subgroup of 5.3900.1 for which the entry in the second row and second column is 1. 
    \item (5.624.8) This is an index 4 subgroup of the maximal subgroup 5.156.2, which is the stabilizer of a 2-dimensional isotropic subspace. With the right choice of basis, 5.624.8 is the subgroup consisting of all block upper triangular elements with blocks of size 2 such that the two diagonal blocks are equal. Alternatively, 5.624.8 is the intersection $Z_{\GL_4(\FF_5)}(\varphi) \cap \GSp_4(\FF_5)$ of $\GSp_4(\FF_5)$ with the centralizer in $\GL_4(\FF_5)$ of an element $\varphi$ with the following Jordan canonical form:
    \[
        \varphi \simeq \begin{bmatrix}
            3&1&0&0\\
            0&3&0&0\\
            0&0&3&1\\
            0&0&0&3
        \end{bmatrix}.
    \]
    The element $\varphi \in \GL_4(\FF_5)$ is so named because it shares a minimal polynomial with the golden ratio. 
    \item (5.300.1) This is a maximal subgroup. Precisely, this is the normalizer of 5.600.2.
    \item (5.600.2) This is an index 2 subgroup of 5.300.1. It is the subgroup preserving each summand in a direct sum decomposition of $\FF_5^4$ into two nondegenerate 2-dimensional subspaces defined jointly, but not individually, over $\FF_5$. Alternatively, 5.600.2 is the intersection $Z_{\GL_4(\FF_5)}(M) \cap \GSp_4(\FF_5)$ of $\GSp_4(\FF_5)$ with the centralizer in $\GL_4(\FF_5)$ of an element $M$ with the following Jordan canonical form:
    \[
        M \simeq \begin{bmatrix}
            0&1&0&0\\
            3&4&0&0\\
            0&0&0&1\\
            0&0&3&4
        \end{bmatrix}.
    \]
    \item (5.9750.1) This is a maximal subgroup of $\GSp_4(\FF_5)$. It is the normalizer of the subgroup preserving a direct sum decomposition of $\GSp_4(\FF_5)$ into two 2-dimensional isotropic subspaces, each defined individually over $\FF_5$. This group also coincides with the exceptional maximal subgroup $G_{1920}$ described in \cite[p.~7]{Vogt-surjectivity}.
    \item (5.6500.1) This is a maximal subgroup of $\GSp_4(\FF_5)$. It is the normalizer of 5.13000.5. 
    \item (5.13000.5) This is index 2 in the maximal subgroup 5.6500.1. It is the subgroup preserving a direct sum decomposition of $\GSp_4(\FF_5)$ into two 2-dimensional isotropic subspaces defined jointly, but not individually, over $\FF_5$.
\end{itemize}

\subsection{Endomorphism algebra $\Q$} \label{sec: typicals}

Of the 3990 Jacobians with non-surjective mod-5 image of Galois, 939 have endomorphism ring $\Z$. Among these, we determined a precise likely image for 898 curves. 

For each of the remaining $41$ curves, we were unable to distinguish between two or more possible images. For $37$ of the $41$ curves with undetermined images, this stems from an inability to definitively rule out the presence of simple quadratic torsion as noted in Remark \ref{rmk: false-negative}. However, even if Algorithm \ref{alg: finding_torsion} always returned \textrm{true} or \textrm{false}, the images of following four curves would still be undetermined.
\begin{itemize} 
    \item The curve given $y^2 = 4x^5+5x^4 -10x^3-25x^2+30x-3$ (37500.a.37500.1) is undetermined between subgroups 5.14976.1 and  5.14976.3. These are non-isomorphic subgroups which are both consistent with the presence of simple quadratic torsion.
    \item The curve given $y^2 = 4x^5-4x^4-16x^3-79x^2-76x-20$ (240250.a.240250.1) is undetermined between subgroups 5.14976.10 and 5.14976.11. These are non-isomorphic subgroups which are both consistent with a lack of simple quadratic torsion.
    \item The curve given $y^2 =-4x^6-12x^5-15x^4+10x^2+4x-3$ (400000.a.400000.1) is undetermined between subgroups 5.12480.19 and 5.12480.17. These are isomorphic as abstract groups and are both consistent with a lack of simple quadratic torsion.
    \item The curve given $y^2 = 4x^5+5x^4-10x^3-5x^2+10x+5$ (787500.a.787500.1) is undetermined between 5.14976.5 and 5.14976.7. These are non-isomorphic subgroups and are both consistent with a lack of quadratic torsion.
\end{itemize}

By far, the most common non-surjective image of Galois for Jacobians with endormophism ring $\Z$ is 5.624.1. Thus, the most common obstruction to surjectivity of the mod-5 image of Galois is presence of rational 5-torsion. The next most common obstruction is presence of quadratic 5-torsion. After these, the most common obstructions are the existence of isogenous varieties with the just-described obstructions. Perhaps surprisingly, it appears to be quite rare for Jacobians to admit 5-isogenies without an additional obstruction; only 16 Jacobians were computed to have mod-5 image of Galois 5.156.1. We expect that this behavior is a phenomenon only in small conductor, and that image 5.156.1 would be more common asymptotically.

Most Jacobians with image 5.650.1 are isogenous to a product of elliptic curves. The sole exception among all 3990 computed images is the curve $C: y^2 = x^6 + 8x^5 + 20x^4 + 20x^3 -8x -4$ (label \texttt{600000.b.600000.1} in the LMFDB). The Jacobian $J$ has geometric endomorphism ring $\Z$, and thus is geometrically simple. Despite this, $J$ has mod-5 image of Galois 5.650.1.

A summary of the results is available in Figure \ref{fig: typical-images-pie-chart}, with the full data in Table \ref{tab: typical image counts} in Appendix A.

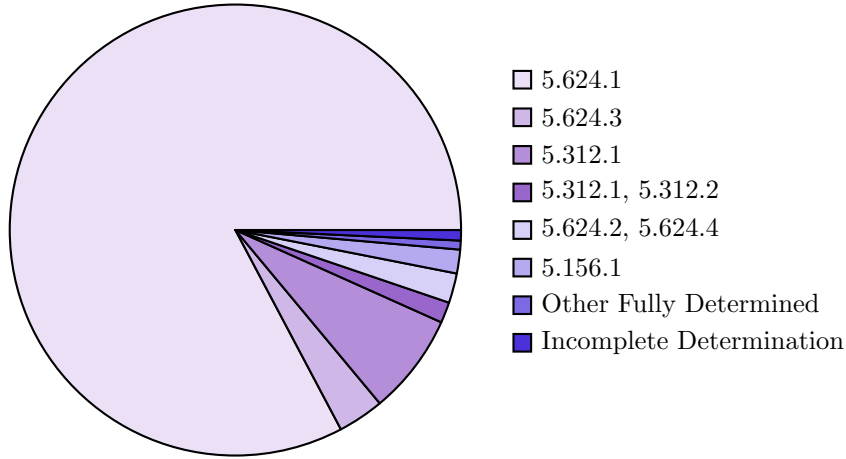
\begin{figure}[H]
    \centering
    {\Large \textbf{Images of Typical Curves}}\\
    \vspace{10pt}
    \begin{tikzpicture}
    \pie[sum = auto, hide number, color={amethyst!20, amethyst!47, amethyst!74, amethyst!100, indigo!20, indigo!37, indigo!65, indigo!90}, text = legend]{
    777/{5.624.1}, 
    31/{5.624.3}, 
    68/{5.312.1}, 
    14/{5.312.1, 5.312.2}, 
    20/{5.624.2, 5.624.4}, 
    16/{5.156.1}, 
    6/Other Fully Determined, 
    7/Incomplete Determination
}
    \end{tikzpicture}
    \caption{A breakdown of the results of Algorithm \ref{alg: imager} on typical curves.}
    \label{fig: typical-images-pie-chart}
\end{figure}

\subsection{Endomorphism algebra $\Q \times \Q$} \label{sec: Q_times_Q}

A majority (2468, to be precise) of the computed images were of Jacobians with endomorphism algebra $\Q \times \Q$. Such Jacobians are isogenous to products of non-isogenous elliptic curves lacking complex multiplication. We were able to compute the precise mod-5 image of Galois for all but twenty of the Jacobians in this category. Of those twenty, ten of those would be distinguishable if Algorithm \ref{alg: finding_torsion} never returned \texttt{maybe}, while the other 10 would still be undetermined. 

By far, the most common image in this category is 5.650.1, attached to just over 88\% of Jacobians with geometric endomorphism algebra $\Q \times \Q$. The next four most common images, in order, are 5.325.1, 5.15600.3, 15600.5, and 5.3900.1. These five images account for over 98\% of Jacobians in this category.

As an example, consider the curve $C: y^2 = x^6 - 4x^4 - 12x^3 - 16x^2 - 12x - 4$ (label \texttt{1573.a.1573.1} in the LMFDB) with Jacobian $J$. We compute that $\im \rho_{J, 5} = 5.15600.3$. From our description of this subgroup, we may expect a rational $n$-isogeny, for $n$ prime to 5, from $J$ to a product $E_1 \times E_2$ of non-isogenous elliptic curves for which $\rho_{E_1, 5}$ is surjective and $\rho_{E_2, 5}$ is the stabilizer of a vector $v \in \FF_5^2$. 

We indeed see exactly this; $J \simeq E_1 \times E_2$, for elliptic curves $E_1$ and $E_2$ given by $E_1: y^2+y=x^3-x^2-x-2$ and $E_2: y^2+y=x^3-x^2$ (LMFDB labels \texttt{143.a1} and \texttt{11.a3} respectively). An LMFDB lookup confirms that $\rho_{E_1, 5}$ and $\rho_{E_2, 5}$ are as we predict from our computations.

Most Jacobians with image 5.624.8 have a geometric endomorphism algebra isomorphic to a real quadratic field, but there are also seven Jacobians in the $\Q \times \Q$ category with image 5.624.8. The example with smallest conductor is the Jacobian of the curve $y^2 = 4x^6+4x^5+17x^4+26x^3+25x^2+36x+20$ (label \texttt{2028.a.64896.1} in the LMFDB). These curves admit isogenies of degree a multiple of 5 to products of elliptic curves.

\subsection{Endomorphism algebra $\Q \times \textrm{CM}$} \label{sec: Q_times_CM}

The third most common category of Jacobians is that consisting of Jacobians whose geometric endomorphism algebra is $\Q \times K$ for $K$ an imaginary quadratic field. Jacobians in this category are isogenous to a product of elliptic curves, exactly one of which admits complex multiplication. Our algorithm handles these Jacobians particularly well; the only curve in this category for which we are not able to determine the precise image has equation $C: y^2 = -3x^6 + 2x^5 - 5x^4 - 5x^2 + 2x - 3$ and label \texttt{16875.a.84375.1} in the LMFDB. The two candidates for $\im \rho_{\Jac(C), 5}$ are distinguished by whether they are consistent with presence of quadratic 5-torsion.

The vast majority of these Jacobians have image 5.9750.2 or image 5.6500.2. Of the remaining 8 Jacobians, 5 have image 5.19500.7. 

The failure of Jacobians in this category to have surjective image of Galois is often a well-understood consequence of how they split. For example, the curve in this category of minimal conductor is $C: y^2 = x^6 - 6x^4 + x^2 + 28$ (LMFDB label \texttt{448.a.448.2}). The Jacobian $J$ of $C$ splits via a rational isogeny into a product $E_1 \times E_2$ of elliptic curves given by $E_1: y^2+xy+y=x^3-x$ and $E_2: y^2=x^3-x$, according to the \cite{lmfdb}. Since $C$ is bielliptic, the isogeny $J \rightarrow E_1 \times E_2$ is of degree prime to 5. The first curve is not CM, and has maximal image of Galois $\rho_{E_1, 5}\cong \GL_2(\FF_5)$. The second curve, $E_2$, has potential complex multiplication; its geometric endomorphism ring is isomorphic to $\Z[i]$. The extra endomorphism $i$, given by $(x,y) \mapsto (-x, iy)$, is defined over $\Q(i)$. The endomorphism $i$ acts on $E_2[5]$ as $\left[\begin{smallmatrix}2&0\\0&3\end{smallmatrix}\right]$, so there is containment $\rho_{E_2, 5}(\Gal(\overline{\Q}/\Q(i))) \leq Z_{\GL_2(\FF_5)}(\left[\begin{smallmatrix}2&0\\0&3\end{smallmatrix}\right])$ of the image of the restricted Galois representation in the centralizer of $i$. As a linear transformation of $E_2[5]$, the extra endomorphism $i$ has centralizer $H$ a split Cartan subgroup of $\GL_2(\FF_5)$. The extension $\Q(i)/\Q$ is of course Galois, so $H$ is a normal subgroup of the image of Galois. Equivalently, the image of Galois is contained in the normalizer $N(H)$. Since the isogeny $J \rightarrow E_1 \times E_2$ has degree prime to 5, $$\im \rho_{J, 5} \leq (\im \rho_{E_1, 5} \times \im \rho_{E_2, 5}) \cap \GSp_4(\FF_5) \leq (\GL_2(\FF_5) \times N(H)) \cap \GSp_4(\FF_5) = 5.9750.2.$$ As expected, our computation of $\rho_{J, 5}$ yields $5.9750.2$. This confirms that there are no additional obstructions to surjectivity of $\rho_{J, 5}$ beyond those already described.

\subsection{Real quadratic endomorphism algebra} \label{sec: real_quad_alg}

There are 116 computed Jacobians with geometric endormorphism algebras isomorphic to real quadratic fields. Real quadratic fields lack 0-divisors, so Jacobians in this category are simple. 

Among these, the most common image is 5.624.8. That is plausible, for the following reason. Many Jacobians in the LMFDB with real multiplication happen to have endomorphism ring $\Z\left[\frac{\sqrt{5}+1}{2}\right]$. This may be a product of a phenomenon where, in small conductor, the endomorphism ring is of small discriminant. In any case, when $\End(J) \cong \Z\left[\frac{\sqrt{5}+1}{2}\right]$, the extra endomorphism generating $\Z\left[\frac{\sqrt{5}+1}{2}\right]$, which we denote $\varphi$, has minimal polynomial $x^2-x-1 \equiv (x-3)^2 \bmod 5$. Thus, $\varphi$ must have only 3 as an eigenvalue, and its Jordan canonical form must be comprised of Jordan blocks of sizes 2, 1, and 1, or of sizes 2 and 2. When $\varphi$ is defined over $\Q$, the image of Galois must be contained in the centralizer of $\varphi$. In the case where $\varphi$ consists of two Jordan blocks of size 2, the centralizer of $\varphi$ is exactly the subgroup 5.624.8. 

The next most common images are 5.600.2 and 5.300.1. These are plausible for similar reasons. For $d \equiv 2, 3 \bmod 5$, any order of $\Q(\sqrt{d})$ is generated by an element with minimal polynomial congruent to $x^2+x+2$ mod 5. Any element of $\GL_4(\FF_5)$ with minimal polynomial $x^2+x+2$ is conjugate to the matrix $M$ given in the description of 5.300.1. Suppose the rationally-defined endomorphism algebra of $J$ is $\Q(\sqrt{d})$ for $d$ non-square mod 5. In this case, there is an endomorphism which acts as $M$ on the 5-torsion. The subgroup 5.600.2 is the centralizer of $M$, so the image of Galois is contained in 5.600.2. If the extra endomorphism is defined over a quadratic extension, then we similarly obtain that the image of Galois is contained in 5.300.1.

Our algorithm has a slightly lower success rate with Jacobians of this type compared to the most common types; we are unable to identify a precise image for 12 of the 116 Jacobians in this category.

\subsection{Endomorphism algebra $M_{2\times 2}(\Q)$} \label{sec: M2Q_alg}

Jacobians with (geometric) endomorphism algebra $M_{2\times 2}(\Q)$ are isogenous to a product $E \times E'$ of isogenous elliptic curves. This cateogry of Jacobians gives our algorithm the most trouble; our algorithm computes a precise likely image for only 105 of the 147 Jacobians in this category.

The most common output of our algorithm for this category is 5.13000.5. The next most common output is $\{5.39000.2, 5.39000.7, 5.3900.8\}$, which accounts for 20 of the 42 curves in this category for which the precise likely image is unknown. This is unfortunate in that the three candidates feature large differences; for example they are contained in different maximal subgroups. Below is the full list of containments.
\begin{itemize}
    \item 5.39000.2 is contained in the maximal subgroups 5.156.2, 5.300.1, 5.325.1, and 5.9750.1. Note that 5.156.2 is not included in Figure \ref{fig:subgroup-lattice}; see the description of 5.624.8 for a description of 5.156.2.
    \item 5.39000.7 is contained in the maximal subgroups 5.300.1, 5.325.1, and 5.6500.1. 5.39000.7 is the intersection of (suitably chosen representatives of) 5.650.1 and 5.6500.1.
    \item 5.39000.8 is contained in the maximal subgroups 5.325.1 and 5.9750.1. 5.39000.8 is the intersection of (suitably chosen representatives of) 5.650.1 and 5.9750.1.
\end{itemize}

\subsection{Other endomorphism algebras} \label{sec: other_alg}

In the LMFDB, 17 genus 2 curves have Jacobians with geometric endomorphism algebras which are not of any of the so-far covered forms. Our algorithm determined a precise likely image of Galois for the Jacobians of 8 of these curves. Our algorithm does not employ knowledge of the endomorphism ring of the Jacobian. Because these Jacobians have comparatively large endomorphism rings, encorporating such information would likely yield an outsized improvement for curves in this category and curves with endomorphism algebra $M_{2 \times 2}(\Q)$.

\section*{Appendix A: Computational Results} 

Below are the results of our computations, broken up by endomorphism algebra type. Minimal-conductor examples are given as LMFDB labels.

\begin{longtable}{ | c | c | c | } 
  \caption{\centering Image counts for mod-$5$ Galois images of LMFDB curves with typical endomorphism ring} \\
  \hline
  \makebox[7cm][c]{Likely image(s)} & \makebox[2cm][c]{Count} & \makebox[5cm][c]{Minimal-conductor example} \label{tab: typical image counts}\\ 
  \hline
   $\{ 5.624.1 \}$	& 777 & 277.a.277.1 \\ 
     \hline
  $\{ 5.312.1 \}$ & 68 & 4293.a.4293.1 \\
  \hline
  $\{ 5.624.3 \}$ & 31 & 523.a.523.2 \\ 
    \hline
  $\{ 5.624.2, 5.624.4 \}$ & 20 & 6625.c.33125.1 \\
    \hline
  $\{ 5.156.1 \}$	& 16 & 8960.c.17920.1 \\
  \hline
  $\{ 5.312.2, 5.312.1 \}$ & 14 & 4672.a.9344.1 \\
    \hline
  $\{ 5.624.2 \}$ & 3 & 431250.a.431250.1 \\
    \hline
  $\{ 5.9360.13, 5.9360.15 \}$ & 2 & 100000.a.200000.1 \\
  \hline
  $\{ 5.78000.22 \}$ & 1 & 9125.a.228125.1 \\
  \hline
  $\{ 5.3744.1 \}$ & 1 & 18928.a.984256.1 \\
  \hline
  $\{ 5.14976.1, 5.14976.3\}$ & 1 & 37500.a.37500.1\\
  \hline
  $\{ 5.6240.2,  5.6240.4 \}$ & 1 & 84375.a.84375.1\\
  \hline
  $\{ 5.14976.10, 5.14976.11 \}$ & 1 & 240250.a.240250.1 \\
  \hline
  $\{ 5.12480.19, 5.12480.17 \}$	& 1 & 400000.a.400000.1\\
  \hline
  $\{ 5.650.1 \}$ & 1 & 600000.b.600000.1 \\
  \hline
  $\{ 5.14976.5, 5.14976.7 \}$ & 1 & 787500.a.787500.1 \\
  \hline
\end{longtable}

\begin{longtable}{ | c | c | c | } 
  \caption{Image counts for mod-$5$ Galois images of LMFDB curves with endomorphism algebra $\Q \times \Q$} \\
  \hline
  \makebox[7cm][c]{Likely image(s)} & \makebox[2cm][c]{Count} & \makebox[5cm][c]{Minimal-conductor example} \label{tab: QxQ image counts}\\ 
  \hline
   $\{ 5.650.1 \}$	& 2188 & 294.a.294.1 \\ 
  \hline
  $\{ 5.325.1 \}$ & 125 & 1088.a.1088.1 \\ 
  \hline
  $\{ 5.15600.3 \}$ & 68 & 363.a.11979.1 \\
  \hline
  $\{ 5.15600.5 \}$ & 30 & 847.d.847.1 \\
  \hline
  $\{ 5.3900.1\}$ & 19 & 8788.a.17576.1 \\
  \hline
  $\{5.14976.2, 5.14976.4\}$ & 10 & 726.a.1452.1 \\
  \hline
  $\{5.15600.4, 5.15600.6\}$ & 8 & 1125.a.151875.1 \\
  \hline
  $\{ 5.624.8 \}$ & 7 & 2028.a.64896.1 \\
  \hline
  $\{ 5.78000.22\}$ & 4 & 363.a.43923.1 \\
  \hline
  $\{ 5.3250.1\}$ & 3 & 52488.a.629856.1 \\
  \hline
  $\{ 5.234000.3 \}$ & 2 & 40000.a.160000.1 \\
  \hline
  $\{ 5.234000.6, 5.234000.4 \}$ & 2 & 40000.c.200000.1 \\
  \hline
  $\{ 5.9750.2\}$ & 2 & 520000.a.520000.1 \\
  \hline
\end{longtable}

\begin{longtable}{ | c | c | c | } 
  \caption{Image counts for mod-$5$ Galois images of LMFDB curves with endomorphism algebra $\Q \times K$ for $K$ an imaginary quadratic field.} \\
  \hline
  \makebox[7cm][c]{Likely image(s)} & \makebox[2cm][c]{Count} & \makebox[5cm][c]{Minimal-conductor example} \label{tab: CM image counts}\\ 
  \hline
   $\{  5.9750.2 \}$ & 163 & 448.a.448.2 \\ 
  \hline
  $\{ 5.6500.2\}$ & 132 & 686.a.686.1 \\ 
  \hline
  $\{ 5.19500.7 \}$ & 5 & 67500.a.810000.1 \\
  \hline
  $\{ 5.234000.3 \}$ & 1 & 1331.a.1331.1 \\
  \hline
  $\{ 5.468000.4, 5.468000.2\}$ & 1 & 16875.a.84375.1 \\
  \hline
  $\{5.468000.1\}$ & 1 & 16875.b.151875.1\\
  \hline
\end{longtable}

\begin{longtable}{ | c | c | c | } 
  \caption{Image Counts for mod-$5$ Galois Images of LMFDB Curves with real quadratic endomorphism algebras} \\
  \hline
  \makebox[7cm][c]{Likely image(s)} & \makebox[2cm][c]{Count} & \makebox[5cm][c]{Minimal-conductor example} \label{tab: RM image counts}\\ 
  \hline
   $\{ 5.624.8 \}$	& 55 & 529.a.529.1 \\ 
  \hline
  $\{ 5.600.2 \}$ & 29 & 841.a.841.1 \\ 
  \hline
  $\{ 5.300.1 \}$ & 10 & 36864.a.36864.1 \\
  \hline
  $\{ 5.74880.4 \}$ & 4 & 961.a.961.2 \\
  \hline
  $\{ 5.14976.2, 5.14976.4\}$ & 4 & 7569.a.68121.1 \\
  \hline
  $\{5.624.7\}$ & 4 & 180625.a.903125.1 \\
  \hline
  $\{5.14976.6, 5.14976.13, 5.14976.8, 5.14976.16 \}$ & 2 & 62500.a.1000000.1 \\
  \hline
  $\{  5.74880.16 \}$ & 1 & 961.a.961.1 \\
  \hline
  $\{ 5.74880.5\}$	& 1 & 961.a.923521.1 \\
  \hline
  $\{5.74880.13\}$ & 1 & 15625.a.15625.1 \\
  \hline
  $\{ 5.374400.2, 5.374400.10 \}$ & 1 & 12500.a.12500.1 \\
  \hline
  $\{ 5.374400.12, 5.374400.4, 5.374400.13 \}$ & 1 & 12500.b.50000.1 \\
  \hline
  $\{ 5.374400.5\}$ & 1 & 50000.b.800000.1 \\
  \hline
  $\{ 5.14976.1, 5.14976.3 \}$ & 1 & 112500.a.450000.1 \\
  \hline 
  $\{ 5.74880.8, 5.74880.21, 5.74880.14  \}$ & 1 & 378125.a.378125.1 \\
  \hline
\end{longtable}

\begin{longtable}{ | c | c | c | } 
  \caption{Image counts for mod-$5$ Galois images of LMFDB curves with endomorphism algebra $M_{2\times 2}(\Q)$} \\
  \hline
  \makebox[7cm][c]{Likely image(s)} & \makebox[2cm][c]{Count} & \makebox[5cm][c]{Minimal-conductor example} \label{tab: M2Q image counts}\\ 
  \hline
   $\{ 5.13000.5 \}$ & 51 & 169.a.169.1 \\ 
  \hline
  $\{ 5.39000.8, 5.39000.7, 5.39000.2\}$ & 20 & 69696.c.627264.1 \\ 
  \hline
  $\{ 5.6500.1 \}$ & 13 & 15552.c.746496.1 \\
  \hline
  $\{ 5.9750.1 \}$ & 12 & 4608.c.27648.1 \\
  \hline
  $\{ 5.78000.4, 5.78000.24\}$ & 8 & 196.a.21952.1 \\
  \hline
  $\{5.26000.1\}$ & 7 & 324.a.648.1 \\
  \hline
  $\{5.19500.3, 5.19500.2\}$ & 6 & 102400.b.102400.1 \\
  \hline
  $\{ 5.19500.1 \}$ & 4 & 6400.f.64000.1 \\
  \hline
  $\{ 5.39000.1\}$ & 4 & 25600.d.128000.1 \\
  \hline
  $\{ 5.13000.4 \}$ & 4 & 6075.a.18225.1 \\
  \hline
  $\{ 5.39000.17, 5.39000.6, 5.39000.5 \}$ & 2 & 3969.d.250047.1 \\
  \hline
  $\{ 5.374400.12, 5.374400.4, 5.374400.13 \}$ & 2 & 2500.a.50000.1 \\
  \hline
  $\{ 5.234000.24, 5.234000.23, 5.234000.18\}$ & 2 & 589824.a.589824.1 \\
  \hline
  $\{5.13000.8\}$ & 2 & 2187.a.6561.1 \\
  \hline
  $\{5.58500.2\}$ & 2 & 262144.b.524288.1 \\
  \hline
  $\{5.468000.25\}$ & 1 & 256.a.512.1\\
  \hline
  $\{5.936000.12, 5.936000.9, 5.936000.13\}$ & 1 & 576.a.576.1\\
  \hline
  $\{5.234000.34\}$ & 1 & 4096.e.524288.1\\
  \hline
  $\{5.19500.4\}$ & 1 & 8192.b.131072.1\\
  \hline
  $\{5.48750.1\}$ & 1 & 12544.g.175616.1\\
  \hline
  $\{5.19500.8\}$ & 1 & 12800.c.128000.1\\
  \hline
  $\{5.65000.5\}$ & 1 & 26244.d.314928.1\\
  \hline
  $\{5.13000.3\}$ & 1 & 26244.e.472392.1\\
  \hline
\end{longtable}

\begin{longtable}{ | c | c | c | } 
  \caption{Image counts for mod-$5$ Galois images of LMFDB curves with endomorphism algebra $M_{2\times 2}(K)$ for $K$ an imaginary quadratic field} \\
  \hline
  \makebox[7cm][c]{Likely image(s)} & \makebox[1cm][c]{Count} & \makebox[4cm][c]{Minimal-conductor example} \label{tab: M2(CM) image counts}\\ 
  \hline
   $\{ 5.390000.12, 5.390000.18, 5.390000.14, 5.390000.13, 5.390000.7\}$ & 4 & 4096.b.65536.1 \\ 
  \hline
  $\{ 5.585000.14, 5.585000.22\}$ & 1 & 5184.a.46656.1 \\ 
  \hline
  $\{ 5.1560000.7 \}$ & 1 & 40000.e.200000.1 \\
  \hline
  $\{  5.260000.11, 5.260000.16 \}$ & 1 & 2916.b.11664.1 \\
  \hline
  $\{ 5.130000.16\}$ & 1 & 11664.a.11664.1 \\
  \hline
\end{longtable}

\newpage

\begin{longtable}{ | c | c | c | } 
  \caption{Image counts for mod-$5$ Galois images of LMFDB curves with endomorphism algebra a quartic CM field} \\
  \hline
  \makebox[7cm][c]{Likely image(s)} & \makebox[2cm][c]{Count} & \makebox[5cm][c]{Minimal-conductor example} \label{tab: CM image counts}\\ 
  \hline
   $\{ 5.374400.5 \}$ & 2 & 10000.b.800000.1 \\ 
  \hline
  $\{ 5.1872000.4, 5.1872000.10, 5.1872000.1\}$ & 1 & 3125.a.3125.1 \\ 
  \hline
  $\{ 5.90000.1 \}$ & 1 & 28561.a.371293.1 \\
  \hline
  $\{ 5.187200.52 \}$ & 1 & 50625.a.253125.1 \\
  \hline
  $\{ 5.187200.52, 5.187200.53\}$ & 1 & 160000.c.800000.1 \\
  \hline
\end{longtable}

\begin{longtable}{ | c | c | c | } 
  \caption{Image counts for mod-$5$ Galois images of LMFDB Curves with non-split quaternion algebras as endomorphism algebras} \\
  \hline
  \makebox[7cm][c]{Likely image(s)} & \makebox[2cm][c]{Count} & \makebox[5cm][c]{Minimal-conductor example} \label{tab: QM image counts}\\ 
  \hline
   $\{ 5.19500.3, 5.19500.2 \}$ & 2 & 262144.d.524288.1 \\ 
  \hline
  $\{ 5.9750.1 \}$ & 1 & 20736.l.373248.1\\ 
  \hline
\end{longtable}

In addition to all the forms described above, it is possible for a Jacobian of a genus 2 curve to have endomorphism algebra $K \times K'$ for $K$ and $K'$ distinct imaginary quadratic fields. However, at the time of writing, there are no such curves in the LMFDB.

\section*{References}
\printbibliography[heading=none]

\end{document}